\DeclareMathAlphabet{\mathpzc}{OT1}{pzc}{m}{it}
\definecolor{darkred}{rgb}{1,0,0} 
\definecolor{darkgreen}{rgb}{0,0.8,0}
\definecolor{darkblue}{rgb}{0,0,1}
\newtheorem{lemma}[equation]{Lemma}
\newtheorem{theorem}[equation]{Theorem}
\newtheorem{proposition}[equation]{Proposition}
\newtheorem{corollary}[equation]{Corollary}
\newtheorem*{corollary*}{Corollary}
\theoremstyle{definition}
\newtheorem{definition}[equation]{Definition}
\newtheorem{remark}[equation]{Remark}
\newtheorem{example}[equation]{Example}
\numberwithin{equation}{section}
\newcommand{\fg}{\mathfrak g}
\newcommand{\fh}{\mathfrak h}
\newcommand{\fm}{\mathfrak m}
\newcommand{\fp}{\mathfrak p}
\newcommand{\fq}{\mathfrak q}
\newcommand{\cH}{{\mathcal H}}
\newcommand{\cO}{{\mathcal O}}
\newcommand{\R}{\mathbb{R}}
\newcommand{\XX}{\mathbb X}
\newcommand{\scG}{\mathscr{G}}
\newcommand{\scH}{\mathscr{H}}
\newcommand{\scX}{\mathscr{X}_{mult}}
\newcommand{\stterm}{\mathsf{st2term}}
\newcommand{\stGpds}{\mathsf{stLinGpds}}
\newcommand{\twterm}{\mathsf{2term}}
\newcommand{\twVect}{\mathsf{2Vect}}
\newcommand{\inv}{^{-1}}
\newcommand{\toto}{\rightrightarrows}
\begin{document}

\title{Invariant vector fields and groupoids}

\author{Eugene Lerman}
  
\address{Department of Mathematics, University of Illinois, Urbana,
  IL 61801}

\begin{abstract} We use the notion of isomorphism between two
  invariant vector fields to shed new light on the issue of
  linearization of an invariant vector field near a relative
  equilibrium.  We argue that the notion is useful in understanding
  the passage from the space of invariant vector fields in a tube
  around a group orbit to the space invariant vector fields on a slice
  to the orbit.  The notion comes from Hepworth's study of vector
  fields on stacks.
\end{abstract}

\maketitle 

\tableofcontents

\section{Introduction}

Dynamics and bifurcation theory of 
group-invariant vector fields is an old and well-established area of
mathematics.  The literature on the subject is vast, and we will not
attempt to review it.  The area draws on a number of fields that
include representation theory, invariant theory, transformation
groups, singularity theory, equivariant transversality and geometric
theory of dynamical systems to name a few.

The goal of this paper is to add category theory to the arsenal of
tools.  More specifically we'd like to bring to the attention of the
dynamics community the notion of {\sf isomorphism} of invariant vector
fields and to show that it is useful and natural.  The source of the
idea lies in Hepworth's study of vector fields on stacks
\cite{Hep}. 

Traditionally one considers the collection $\Gamma(TM)^G$ of vector
fields on a manifold $M$ invariant under an action of a Lie group $G$
as a vector space with a topology, which is often a Whitney $C^\infty$
topology.  Note that multiplication by scalars in not continuous
\cite[pp.\ 46-47]{GG} in these topologies, so $\Gamma(TM)^G$ is a {\sf
  semi}-topological vector space.\footnote{The term ``semi-topological
  vector space'' is rarely used but it is convenient for our purposes.
  It is a also a somewhat ambiguous --- it is also used to describe
  vector spaces with a topology for which multiplication by scalars is
  continuous but addition is not.  We trust that our use of the term
  will not cause any undue confusion.}  On the other hand Hepworth
({\em op.\ cit.})  tells us that the space $\Gamma(TM)^G$ of invariant
vector fields is naturally the set of objects of a groupoid.  In this
paper we combine the two approaches by considering the space of
invariant vector fields as the space of objects in a linear
semi-topological groupoid which we denote by $\XX (G\times M\toto M)$
(see Definition~\ref{def:XX}).  This said, we strive to keep the paper
accessible to readers unfamiliar with groupoids and formulate most of
our results without mentioning groupoids explicitly.  We hope that a
reader can ignore all the references to groupoids and still profit
from reading the paper.

We give two applications of viewing invariant vector fields as objects of a
groupoid to equivariant dynamics.  The first one quantifies
non-uniqueness of linearization of invariant vector fields near
relative equilibria; we thus revisit the work of Krupa \cite{Krupa}.
The second deals with the notions of genericity for invariant vector
fields.
 
Given a manifold $M$ with a proper action of a Lie group $G$, consider
a $G$-invariant vector field $X\in \Gamma (TM)^G$.  Recall that a
point $x\in M$ is a {\sf relative equilibrium} of $X$ if the vector $X(x)$ is
tangent to the orbit $G{\cdot} x$.  Let $H$ denote the stabilizer of
$x$.  Following Krupa choose a slice $S$ through $x$ to the action of
$G$.  Then the restriction of
$X$ to the slice $S$ can be decomposed as
\begin{equation}
X|_S = X^\mathrm{h} + X^S  \label{eq.1}
\end{equation}
where $X^S$ is an $H$-invariant vector field on the slice and
$X^\mathrm{h}$ is tangent to the $G$-orbits (see also Lemma~8.5.3 in
\cite{Field}).  Note that $X^S$ vanishes at $x$. One can then deduce a
number of useful results about the dynamics of $X$ by studying the
dynamics of $X^S$ in a neighborhood of its equilibrium $x$.
Similarly, given a family of vector fields $X_\lambda$ one can analyze
the bifurcations of a relative equilibrium of $X_\lambda$ in terms of
the bifurcation of its projection $X_\lambda^S$ onto the slice.  For
all practical purposes one may think of the slice $S$ as a vector
space with a linear action of the compact group $H$.  We note that
dynamics and bifurcation theory of invariant vector fields on
representations have been studied intensely and extensively, and is
well understood. So reducing the study of invariant vector fields near
relative equilibria to the study of zeros of invariant vector fields
in representations is a natural thing to do.

However, neither the slice $S$ nor the decomposition \eqref{eq.1} for
a given choice of a slice are unique.  Nor is it clear that if a
vector field $X$ is generic then its projection $X^S$ is generic and
conversely.  Thus given two different choices of slices $S, S'$
through $x$ and $x' \in G\cdot x$ respectively and two choices of
splittings \eqref{eq.1}, it is far from clear how exactly the vector
fields $X^S$ and $X^{S'}$ are related (if they are related at all).
In particular there is no apparent relation between the spectra of the
linearizations $DX^S(x)$ and $DX^{S'}(x')$.  Consequently the notion
of the spectrum of a vector field at a relative equilibrium does not
seem to make sense (it is known the real part of the spectrum is
well-defined; see \cite[Lemma~8.5.2]{Field}).  This is one instance
where the notion of isomorphism of vector fields turns out to be
useful.

Given an action of a Lie group $G$ on a manifold $M$, as before,
consider the vector space 
\begin{equation}\label{eq:1}
C^\infty (M, \fg)^G =\{ \psi:M\to \fg \mid 
\psi(g\cdot m ) = Ad(g) \psi(m)\quad \textrm{ for all } m\in M, g\in G\}
\end{equation}
of Lie algebra valued equivariant maps.  This vector space maps
 into the space of invariant vector fields: we have a linear map
\begin{equation}\label{eq:chain-cx}
\partial: C^\infty (M, \fg)^G \to \Gamma(TM)^G, \quad \partial(\psi):= \psi_M
\end{equation}
where  the vector field $\psi_M$ is defined by
\begin{equation}\label{eq:1'}
\psi_M (m): = \left. \frac{d}{dt}\right|_{t=0} \exp (t\psi (m))\cdot m
\quad \textrm{ for all } m\in M.
\end{equation}
Following Hepworth (cf.\ \cite[Proposition~6.1]{Hep}) we have:
\begin{definition}[Isomorphic vector fields]\label{def:iso-vect}
  Two invariant vector fields $X, Y\in \Gamma(TM)^G$ are {\sf
    isomorphic} if there is an equivariant map $\psi\in C^\infty
  (M,\fg)^G$ with
\[
X = Y + \psi_M, 
\] 
where $\psi_M$ is defined by \eqref{eq:1'} above.
\end{definition}

The flows of two isomorphic vector fields are related by time-dependent ``gauge transformation.''  More precisely in Section~\ref{sec:2} below we prove:

\begin{theorem}\label{thm:1.0}
  Suppose two $G$-invariant vector fields $X$ and $Y$  on a manifold $M$ are
  isomorphic in the sense of Definition~\ref{def:iso-vect}.  Then
  there exists a family of maps $\{F_t:M\to G\}$ depending smoothly on
  $t$ so that flows $\Phi^X_t$, $\Phi^Y_t$ of $X$ and $Y$ respectively
  satisfy
\[
\Phi^X _t (m) = F_t (m)\cdot \Phi^Y_t (m) \quad 
\]
 for all $ (t,m) \in \R\times M$ for which $\Phi^X _t (m)$ is defined.
\end{theorem}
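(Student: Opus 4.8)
The plan is to fix a point $m\in M$, integrate $Y$ to obtain the curve $c(t):=\Phi^Y_t(m)$ on its maximal interval $J_m$, and then \emph{construct} the gauge curve $t\mapsto F_t(m)$ in $G$ as the solution of an auxiliary ordinary differential equation engineered so that $t\mapsto F_t(m)\cdot c(t)$ becomes an integral curve of $X$. Concretely, since $X=Y+\psi_M$ by Definition~\ref{def:iso-vect}, I would take $F_t(m)$ to solve the time-dependent equation
\[
F_t(m)^{-1}\,\tfrac{d}{dt}F_t(m) = \psi\big(\Phi^Y_t(m)\big)\in\fg,\qquad F_0(m)=e,
\]
i.e.\ the left logarithmic derivative of $F_\bullet(m)$ is prescribed to be $\psi$ evaluated along the $Y$-flow. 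The right-hand side is a smooth $\fg$-valued function of $(t,m)$, and equations of this left-invariant type on a Lie group are complete (the local existence time is independent of the initial point, by left invariance), so the solution exists on all of $J_m$ and depends smoothly on $(t,m)$; this produces the family $\{F_t\}$.

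The heart of the argument is differentiating $\gamma(t):=F_t(m)\cdot c(t)$. Writing the action as $a:G\times M\to M$ and applying the chain rule in both slots, $\dot\gamma(t)$ splits into a contribution from the $M$-slot and one from the $G$-slot. For the $M$-slot, $\dot c(t)=Y(c(t))$ together with the $G$-invariance of $Y$ gives $T L_{F_t(m)}\,Y(c(t)) = Y(F_t(m)\cdot c(t)) = Y(\gamma(t))$. For the $G$-slot, a short computation (writing $F_{t+s}(m)=\big(F_{t+s}(m)F_t(m)^{-1}\big)F_t(m)$ and noting that $s\mapsto F_{t+s}(m)F_t(m)^{-1}$ passes through $e$ with velocity $\xi(t):=\big(\tfrac{d}{dt}F_t(m)\big)F_t(m)^{-1}$) identifies the contribution with the generator $\xi(t)_M(\gamma(t))$, where for $\eta\in\fg$ we write $\eta_M$ for the vector field defined as in \eqref{eq:1'}. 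Thus $\dot\gamma(t) = \xi(t)_M(\gamma(t)) + Y(\gamma(t))$.

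It then remains to identify $\xi(t)_M(\gamma(t))$ with $\psi_M(\gamma(t))$, and this is exactly where the defining equation and the equivariance of $\psi$ interlock. Since $\gamma(t)=F_t(m)\cdot c(t)$, the equivariance condition in \eqref{eq:1} gives $\psi(\gamma(t))=\mathrm{Ad}(F_t(m))\,\psi(c(t))$, while the chosen equation $F_t(m)^{-1}\tfrac{d}{dt}F_t(m)=\psi(c(t))$ is equivalent to $\xi(t)=\tfrac{d}{dt}F_t(m)\,F_t(m)^{-1}=\mathrm{Ad}(F_t(m))\psi(c(t))=\psi(\gamma(t))$. Hence $\xi(t)_M(\gamma(t)) = \big(\psi(\gamma(t))\big)_M(\gamma(t)) = \psi_M(\gamma(t))$, so $\dot\gamma(t)=Y(\gamma(t))+\psi_M(\gamma(t))=X(\gamma(t))$ with $\gamma(0)=e\cdot m=m$. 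By uniqueness of integral curves, $\gamma(t)=\Phi^X_t(m)$, which is the asserted identity.

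Finally I would reconcile the domains: the construction realizes $\gamma$ as an $X$-integral curve on all of $J_m$, so the $Y$-flow domain is contained in the $X$-flow domain; running the same argument with $X$ and $Y$ interchanged (replacing $\psi$ by $-\psi\in C^\infty(M,\fg)^G$) yields the reverse inclusion, so the maximal intervals coincide and the identity holds precisely on the domain of $\Phi^X$. The main obstacle is the bookkeeping of the two differentiation terms in the second step: keeping the left/right logarithmic-derivative conventions consistent with the sign convention of \eqref{eq:1'}, and correctly invoking the $G$-invariance of $Y$ and the $\mathrm{Ad}$-equivariance of $\psi$. Once those are aligned, the key cancellation $\xi(t)=\psi(\gamma(t))$ is forced by the auxiliary ODE, and the conclusion follows from the standard uniqueness theorem for flows.
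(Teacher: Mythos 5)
Your proof is correct and follows essentially the same route as the paper: solve the auxiliary ODE prescribing the logarithmic derivative of $F_t(m)$ to be $\psi$ along the $Y$-flow, then check via the chain rule, the $G$-invariance of $Y$, and the $\mathrm{Ad}$-equivariance of $\psi$ that $t\mapsto F_t(m)\cdot\Phi^Y_t(m)$ integrates $X$, concluding by uniqueness of integral curves. Your bookkeeping of left versus right logarithmic derivatives is if anything slightly more careful than the paper's, whose displayed ODE is written with $TR_{g(t)}$ while the subsequent computation uses the left-translated convention you adopt.
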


\noindent 
The purpose of Theorem~\ref{thm:1.0} is (1) to give readers a feel for
what it means dynamically for two vector fields to be isomorphic and
(2) to show that the notion of equivalence of vector fields has been
around implicitly in equivariant dynamics literature for quite some
time.  We then show that isomorphic vector fields occur naturally and
that the notion of isomorphism of vector fields is useful.
\begin{theorem}\label{thm:1.4}
  Let $X\in \Gamma(TM)^G$ be an invariant vector field, $x_1, x_2 $
  two points in $M$ on the same orbit, $S_1$, $S_2$ slices through the
  points $x_1, x_2$ respectively and $X^{S_i} \in \Gamma(TS_i)^{H_i}$
  the components of $X$ tangent to the corresponding slices ($H_i$ is
  the stabilizer of $x_i$).  Then, shrinking the slices if necessary,
  there exists an equivariant diffeomorphism $\varphi:S_1 \to S_2$ so
  that the vector fields $\varphi_* (X^{S_1})$ and $X^{S_2}$ are
  isomorphic.
\end{theorem}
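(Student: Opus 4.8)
The plan is to exploit two reductions and then a single local computation built on the $G$-invariance identity $X(g\cdot u) = g_* X(u)$. First I would remove the discrepancy between the two points. Since $x_1,x_2$ lie on a common orbit, pick $g\in G$ with $x_2 = g\cdot x_1$; left translation $L_g$ is a diffeomorphism of $M$ intertwining the $H_1$- and $H_2$-actions through conjugation, it carries the slice $S_1$ to a slice $g\cdot S_1$ through $x_2$, and because $X$ is $G$-invariant we have $(L_g)_* X = X$, so $(L_g)_*(X^{S_1}) = X^{g S_1}$. Thus it suffices to treat two slices through the \emph{same} point $x=x_2$ with common stabilizer $H:=H_2$; the final $\varphi$ is obtained by precomposing with $L_g|_{S_1}$. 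Note that the stabilizer $H$ is compact (the action is proper), so I may fix an $H$-invariant splitting $\fg=\fh\oplus\fm$ and an invariant metric, and I will use freely that ``isomorphic'' is an equivalence relation on $\Gamma(TS)^H$ which is preserved by pushforward along $H$-equivariant diffeomorphisms (such a pushforward sends a generator field $\psi_S$ to another generator field).

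The second reduction is to compare an arbitrary slice with one fixed reference slice. Let $S_0$ be the ``metric slice'' $(T_x(G\cdot x))^\perp$, so that the slice theorem gives a $G$-equivariant identification of a tube around $G\cdot x$ with $G\times_H V$, $V=T_xS_0$, under which $S_0=\{[e,v]\}$. The core lemma I would prove is: for any slice $S$ through $x$ there is, after shrinking, an $H$-equivariant diffeomorphism $\varphi_S\colon S_0\to S$ with $(\varphi_S)_*(X^{S_0})$ isomorphic to $X^S$. Granting this, the theorem follows by setting $\varphi=\varphi_{S_2}\circ\varphi_{S_1}^{-1}\colon S_1\to S_2$ and chasing the two isomorphisms through the equivariant pushforwards, which gives $\varphi_*(X^{S_1}) = X^{S_2}+\psi_{S_2}$ for an equivariant $\psi$.

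To prove the lemma I would use that, near $x$, the $H$-invariant transverse submanifold $S$ is a graph over $S_0$: there is an $H$-equivariant map $\alpha\colon V\to\fm$ with $\alpha(0)=0$ such that $S=\{[\exp\alpha(v),v]\}$, and I set $\varphi_S([e,v]) = [\exp\alpha(v),v]$, whose $H$-equivariance is immediate from $\alpha(h v)=\mathrm{Ad}_h\alpha(v)$. Writing $p=\varphi_S([e,v])=\exp\alpha(v)\cdot[e,v]$ and using $G$-invariance,
\[
X(p) = \bigl(\exp\alpha(v)\bigr)_*\bigl(X^{\mathrm{h}}_0([e,v]) + X^{S_0}(v)\bigr),
\]
while differentiating the graph map shows $D\varphi_S(\dot v)$ equals $(\exp\alpha(v))_*\dot v$ plus an orbit-tangent term coming from $D\alpha$. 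Setting $\dot v = X^{S_0}(v)$ and subtracting, the two $(\exp\alpha(v))_* X^{S_0}(v)$ contributions cancel, so $\varphi_{S*}(X^{S_0})(p) - X(p)$ is tangent to the $G$-orbit. Since $\varphi_{S*}(X^{S_0})$ is tangent to $S$ and, by \eqref{eq.1}, $X(p)-X^S(p)=X^{\mathrm h}(p)$ is orbit-tangent, the difference $\varphi_{S*}(X^{S_0}) - X^S$ is a vector field on $S$ that is simultaneously tangent to $S$ and tangent to $G$-orbits. The standard slice fact $T_p(G\cdot p)\cap T_pS = T_p(H\cdot p)$ then forces this difference to be an $H$-invariant vector field tangent to the $H$-orbits in $S$.

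It remains to recognize such a field as a generator field $\psi_S$ in the sense of Definition~\ref{def:iso-vect} and \eqref{eq:1'}, i.e.\ to solve $\psi(s)_S = \varphi_{S*}(X^{S_0})-X^S$ for a \emph{smooth} $H$-equivariant $\psi\colon S\to\fh$; this is the step I expect to be the main obstacle. Pointwise the equation is solvable because $\fh\to T_s(H\cdot s)$, $\eta\mapsto\eta_S(s)$, is onto its image with kernel the stabilizer subalgebra, and compactness of $H$ lets me pick the minimal-norm solution in the orthogonal complement; the genuine difficulty is smoothness of this choice across the strata where the $H$-stabilizers jump. I would resolve it by invoking the equivariant-smoothness results available for compact group actions (so that an invariant vector field tangent to the orbits of a linear compact action lies in the image of $\partial$), applied in the linear model $S\cong V$, which yields the desired smooth equivariant $\psi$ and completes the proof.
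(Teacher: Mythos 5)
Your overall strategy coincides with the paper's up to its last step: reduce to two slices through the same point by translating with $g$ (using $(L_g)_*X=X$), realize the second slice as a graph $y\mapsto f(y)\cdot y$ over the first (your $f=\exp\circ\alpha$ is exactly the map of the paper's Lemma~\ref{lem:2.3}), differentiate the graph map, and observe that the discrepancy $\varphi_{S*}(X^{S_0})-X^{S}$ is tangent both to the slice and to the $G$-orbits, hence to the $H$-orbits in the slice. The genuine gap is in the final step. Having reduced to ``the difference is an $H$-invariant vector field tangent to the $H$-orbits,'' you still must produce a \emph{smooth} $H$-equivariant $\psi\colon S\to\fh$ with $\psi_S$ equal to that difference, and you dispose of this by ``invoking the equivariant-smoothness results available for compact group actions.'' That is not a proof: the assertion that every invariant vector field tangent to the orbits of a compact linear action lies in the image of $\partial$ is a nontrivial lifting/division statement --- the pointwise minimal-norm solution is not smooth across the orbit-type strata, which is exactly where the difficulty sits --- and you neither state a precise theorem nor check that it applies in the equivariant, $\fh$-valued form you need. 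As written, the proof is incomplete at its crux.

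The repair is to never discard the Lie-algebra-valued data in the first place, which is what the paper does. Each of the three terms whose sum is the discrepancy is already of the form $(\text{explicit smooth equivariant }\fg\text{-valued function})_M$: the two horizontal components are $(\Psi^{S})_M$ and $(\Psi^{S'})_M$ with $\Psi^{S},\Psi^{S'}\in C^\infty(G{\cdot}S,\fg)^G$ determined by the connection (Remark~\ref{rmrk:3.11}), and your ``orbit-tangent term coming from $D\alpha$'' is $\mu_M$ with $\mu(y)=TL_{f(y)^{-1}}\bigl(Tf_y(X(y))\bigr)$, manifestly smooth and extendable equivariantly to the tube (Lemma~\ref{lem:3.15}). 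Hence the difference equals $(\Psi^{S}-\Psi^{S'}-\mu)_M$ restricted to the slice, and $\nu:=\Psi^{S}-\Psi^{S'}-\mu$ is smooth and equivariant for free; the only remaining check is that $\nu$ takes values in $\fh$ along the slice, which follows from your own observation that $\nu_M$ is tangent to the slice there (in the tube model $\xi_M(p)\in T_pS$ forces $\xi\in\fh$). A smaller omission: the fields $X^{S_i}$ in the statement depend on a choice of $H_i$-equivariant splitting $\fg=\fh_i\oplus\fm_i$, while you fix a single metric and splitting throughout; the paper includes a separate lemma showing that changing the splitting changes $X^{S}$ only by an isomorphism, which you would also need if the given decompositions are not assumed to come from your fixed choices.
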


\begin{remark}
  The decomposition \eqref{eq.1} implicitly used in the statement of
  Theorem~\ref{thm:1.4} is different from the one defined by Krupa:
  instead of using an invariant Riemannian metric on $M$ we use a left
  invariant connections on the principal bundles $G\to G\cdot x_i$,
  $i=1,2$.  
\end{remark}
\begin{theorem}\label{thm:1.5}
  Let $\rho:H\to GL(V)$ be a representation of a compact Lie group $H$
  and $X,Y:V\to V$ two $H$-invariant vector fields that are isomorphic,
  i.e., differ by a vector field $\psi_V$ induced by an equivariant
  function $\psi:V\to \fh$.  Suppose $X(0) =0$.  Then $Y(0) = 0$ and
\[
DX(0) = DY(0) + \delta \rho (\psi(0)).
\]
Here $\delta\rho:\fh\to
\mathfrak{gl}(V)$ is the corresponding representation of the Lie
algebra $\fh$ of $H$.
\end{theorem}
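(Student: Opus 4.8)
The plan is to reduce everything to a single explicit formula for the induced vector field $\psi_V$ and then differentiate it at the origin. Since the action of $H$ on $V$ is linear, $h\cdot v = \rho(h)v$, the defining equation \eqref{eq:1'} becomes
\[
\psi_V(v) = \left.\frac{d}{dt}\right|_{t=0}\rho\bigl(\exp(t\psi(v))\bigr)\,v = \delta\rho(\psi(v))\,v,
\]
because for a \emph{fixed} $v$ the curve $t\mapsto \exp(t\psi(v))$ is an ordinary one-parameter subgroup of $H$, and differentiating $\rho(\exp(t\xi))$ at $t=0$ gives the linear map $\delta\rho(\xi)$. Thus $\psi_V:V\to V$ is the map $v\mapsto \delta\rho(\psi(v))v$, a product of the matrix-valued function $v\mapsto \delta\rho(\psi(v))$ with the position vector $v$.

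First I would dispose of the claim $Y(0)=0$. Evaluating the formula above at the origin gives $\psi_V(0)=\delta\rho(\psi(0))\cdot 0 = 0$, so from $X=Y+\psi_V$ we obtain $Y(0)=X(0)-\psi_V(0)=0$. Next I would compute $D\psi_V(0)$. Writing $A(v):=\delta\rho(\psi(v))$, so that $\psi_V(v)=A(v)v$, the product rule gives, for any $w\in V$,
\[
D\psi_V(0)\,w = \bigl(DA(0)\,w\bigr)\cdot 0 + A(0)\,w = \delta\rho(\psi(0))\,w.
\]
Hence $D\psi_V(0)=\delta\rho(\psi(0))$, and differentiating $X=Y+\psi_V$ at $0$ yields $DX(0)=DY(0)+\delta\rho(\psi(0))$, as desired.

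The computation is short, and the only real subtlety—the step I would be most careful about—is the differentiation of $\psi_V$ at the origin. The point is that, although $\psi$ is a genuinely nonconstant function of $v$, its variation enters $\psi_V(v)=A(v)v$ only through the factor $A(v)$, which at $v=0$ is multiplied by the position vector $v=0$. Consequently the term $DA(0)w$ is annihilated, and only the value $\psi(0)$—not the full derivative $D\psi(0)$—survives in $D\psi_V(0)$. This is precisely why the correction to the linearization depends on $\psi$ only through $\psi(0)\in\fh$, and it is the mechanism that makes the isomorphism class of a vector field at a zero carry well-defined spectral information only up to the image of $\delta\rho$.
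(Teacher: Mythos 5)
Your proposal is correct and follows essentially the same route as the paper: both arguments reduce to the explicit formula $\psi_V(v)=\delta\rho(\psi(v))\,v$ (the paper computes this as an iterated $\partial^2/\partial s\,\partial t$ derivative along the ray $s\mapsto sv$, you state it directly) and then apply the product rule at $0$, where the factor $v=0$ kills the term involving $D\psi(0)$. Your version is slightly cleaner in that it also records the trivial observation $Y(0)=0$ explicitly, which the paper's proof omits.
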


Together Theorems~\ref{thm:1.4} and \ref{thm:1.5} quantity the
ambiguity of linearization at relative equilibria:

\begin{theorem}\label{thm:1.100}
Let $X, X^{S_1}, X^{S_1}$ etc.\ be as in Theorem~\ref{thm:1.4}.  Then
\begin{equation}\label{eq:1.100}
D(X^{S_2}) (x_2) - T\varphi_{x_1}\, D (X^{S_1}) (x_1) \in 
\delta \rho(\psi(\fh_2^{H_2}))
\end{equation}
where $\rho:H_2 \to GL(T_{x_2}S_2)$ is the slice representation and
$\fh_2^{H_2}$ is the space of vectors in the Lie algebra of $H_2$
fixed by the adjoint representation.

In particular the linear map $T\varphi_{x_1}:T_{x_1}S_{x_1}\to
T_{x_2}S_{x_2}$ sends the eigenvectors of the linearization $D
(X^{S_1}) (x_1) $ to the eigenvectors of $D(X^{S_2}) (x_2)$, and the
real parts of the corresponding eigenvalues are the same.
\end{theorem}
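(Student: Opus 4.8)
The plan is to deduce Theorem~\ref{thm:1.100} by feeding the diffeomorphism produced by Theorem~\ref{thm:1.4} into the linear statement of Theorem~\ref{thm:1.5}, after transporting everything to the slice representation at $x_2$. First I would invoke Theorem~\ref{thm:1.4} to obtain, after shrinking the slices, an equivariant diffeomorphism $\varphi:S_1\to S_2$ together with an equivariant map $\psi:S_2\to\fh_2$ realizing the isomorphism, so that
\[
X^{S_2} = \varphi_*(X^{S_1}) + \psi_{S_2}.
\]
By the slice theorem I may identify a neighborhood of $x_2$ in $S_2$ with a neighborhood of $0$ in the slice representation $V=T_{x_2}S_2$, $H_2$-equivariantly and so that $x_2$ corresponds to $0$. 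Since $x_2$ is a relative equilibrium, the slice component $X^{S_2}$ vanishes at $x_2$, and because $\varphi(x_1)=x_2$ and $X^{S_1}(x_1)=0$ the pushed-forward field $\varphi_*(X^{S_1})$ vanishes at $x_2$ as well.

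With these vanishing conditions in hand I would apply Theorem~\ref{thm:1.5} to the pair $X=X^{S_2}$, $Y=\varphi_*(X^{S_1})$ on $V$, which are isomorphic via $\psi$ (only the $1$-jets at $0$ enter, so working on a neighborhood suffices). This yields
\[
D(X^{S_2})(x_2) = D\bigl(\varphi_*(X^{S_1})\bigr)(x_2) + \delta\rho(\psi(x_2)).
\]
The chain rule at the fixed point gives $D(\varphi_*(X^{S_1}))(x_2) = T\varphi_{x_1}\,D(X^{S_1})(x_1)\,(T\varphi_{x_1})^{-1}$, which is the term subtracted in \eqref{eq:1.100}. It remains to locate $\psi(x_2)$: equivariance of $\psi$ gives $\psi(0)=\psi(h\cdot 0)=Ad(h)\psi(0)$ for every $h\in H_2$ (as $0$ is fixed), so $\psi(x_2)=\psi(0)\in\fh_2^{H_2}$ and hence $\delta\rho(\psi(x_2))\in\delta\rho(\fh_2^{H_2})$, establishing \eqref{eq:1.100}.

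For the final assertion about eigenvectors, set $A:=D(\varphi_*(X^{S_1}))(x_2)$ and $C:=\delta\rho(\xi)$ with $\xi:=\psi(x_2)\in\fh_2^{H_2}$. Being $Ad(H_2)$-fixed, $\xi$ is central, so $\exp(t\xi)$ commutes with $H_2$; since $A$ is the linearization of an $H_2$-invariant vector field at a fixed point it commutes with the isotropy representation, and differentiating $\rho(\exp t\xi)$ then gives $[A,C]=0$. Compactness of $H_2$ furnishes an invariant inner product for which $\rho$ is orthogonal, so $C$ is skew-symmetric, hence semisimple with purely imaginary spectrum. Because $C$ is semisimple and commutes with $A$, I would decompose $V_\C$ into $C$-eigenspaces, each preserved by $A$, and within each one (where $C$ acts as a scalar $i\mu$) choose eigenvectors of $A$; each such vector is then an eigenvector of $B:=A+C$ with eigenvalue shifted by the purely imaginary $i\mu$, so its real part is unchanged. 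Transporting back through $T\varphi_{x_1}$, which matches eigenvectors of $D(X^{S_1})(x_1)$ with those of $A$ and preserves eigenvalues, shows that $T\varphi_{x_1}$ carries these adapted eigenvectors of $D(X^{S_1})(x_1)$ to eigenvectors of $D(X^{S_2})(x_2)$ with the same real part. The main obstacle I anticipate is precisely this last linear-algebra step: one must choose the eigenvectors adapted to the commuting semisimple $C$ so that the imaginary shift is genuinely scalar on each, and verify carefully that $A$ itself (not merely $\rho(H_2)$) commutes with $C$.
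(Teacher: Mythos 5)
Your proposal is correct and follows essentially the same route as the paper: identify $S_2$ with the slice representation at $x_2=0$, apply Theorem~\ref{thm:1.4} to produce $\psi$, apply Theorem~\ref{thm:1.5} to get $D(X^{S_2})(0)-D(\varphi_*X^{S_1})(0)=\delta\rho(\psi(0))$, use equivariance at the fixed point to place $\psi(0)$ in $\fh_2^{H_2}$, and use the commutation of $D(\varphi_*X^{S_1})(0)$ with $\delta\rho(\psi(0))$ together with compactness of $H_2$ for the spectral claim. Your handling of the last step (simultaneous reduction with respect to the semisimple, purely imaginary $C$ and choosing eigenvectors adapted to its eigenspaces) is in fact more careful than the paper's brief assertion that the two commuting maps ``have the same eigenvectors.''
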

\begin{remark}
  Theorem~\ref{thm:1.100} is sharper than
  \cite[Lemma~8.5.2]{Field}. For example, if the space $
  \rho(\psi(\fh_2^{H_2}))$ is zero then the $T\varphi_{x_1}$ defines
  an  isomorphism between the linearizations $D(X^{S_2}) (x_2)$ and $D
  (X^{S_1}) (x_1)$.  See Example~\ref{ex:1} below.
\end{remark}

\begin{example}\label{ex:1}
  Consider a particle of mass $m$ in 3-space subject to a central
  force field $F$.  Then the phase space of the system is $T\R^3
  \simeq \R^3 \times \R^3$, the equations of motion are of the form
\[
\left \{
\begin{array} {rcl}  \dot{q}&=& v\\
m\dot{v} &= &F(q,v),  
\end{array} 
\right.
\]
and the force $F:\R^3\times \R^3 \to \R^3 $ is $O(3)$-invariant:
\[
F(Aq, Av) = F(q,v)\quad \textrm{ for all }q,v\in \R^3,\,\, 
\textrm{ all }A\in O(3).
\]
The action of $O(3)$ on $\R^3\times \R^3$ has 3 orbit types. In more
detail, $(0,0)$ is the only fixed point.  For $(q,v)\in \R^3\times
\R^3$ with $q$ and $v$ linearly independent the stabilizer is
isomorphic to $\{\pm 1\} = ``O(1)"$.  If $q,v$ are two linearly
dependent vectors with $(q,v) \not = (0,0)$, then the stabilizer of
$(q,v)$ is isomorphic to $O(2)$.  In all cases the space $\fh^H$ of
fixed points in the Lie algebra $\fh$ of a stabilizer $H$ is zero.
Consequently  the linearization of the vector field
\[
X= \sum_{i=1}^3 \left( v_i\frac{\partial}{\partial q_i} + 
\frac{1}{m}F_i(q,v)
\frac{\partial}{\partial v_i}\right)
\]
at {\sf any} relative equilibrium is well-defined --- it doesn't matter
which slice through the relative equilibrium we pick, and it doesn't
matter which projection of the vector field $X$ onto the slice we
choose in order to compute the linearization.
\end{example}

\begin{remark}
  Theorem~\ref{thm:1.4} holds for families of vector fields: if
  $\{X_\lambda\}_{\lambda\in \Lambda}$ is a family of invariant vector
  fields then the vector fields $\varphi_* ({X_\lambda}^{S_1})$ and
  ${X_\lambda}^{S_2}$ are isomorphic with the isomorphism depending
  smoothly on $\lambda$.
\end{remark}

We next address the issue of genericity.  For the purposes of this
paper a set is {\em residual} if it is the intersection of countably
many open dense sets. A vector field on a manifold $M$ is {\em
  generic} if it belongs to a residual subset of $\Gamma(TM)$, Now
suppose once again that we have a proper action of a Lie group $G$ on
a manifold $M$, $H$ is the stabilizer of a point $x\in M$ and $S$ is a
slice through $x$.  We then have a canonical injective linear map
\begin{equation}\label{eq:1.8}
\Gamma (TS)^H\hookrightarrow \Gamma (T(G\cdot S))^G
\end{equation}
from the space of $H$-invariant vector field on the slice $S$ to the
space of $G$-invariant vector fields on the tube $G\cdot S$.  It is
clear that the image has infinite codimension.  So the notion of being
generic in these two spaces clearly don't correspond. A solution to
the problem is provided by:

\begin{theorem}\label{thm:1.11}
The map \eqref{eq:1.8} induces an invertible linear map 
\begin{equation}\label{eq:1.10}
  \Gamma (TS)^H/C^\infty (S, \fh)^H \to 
\Gamma (T(G{\cdot} S))^G/C^\infty (G{\cdot} S, \fg)^G
\end{equation}
from the space of isomorphism classes of invariant vector fields on
the slice to the space of isomorphism classes of invariant vector
fields on the tube.  Moreover if the spaces $ \Gamma (TS)^H, C^\infty
(S, \fh)^H, \Gamma (T(G{\cdot}S))^G$ and $C^\infty (G{\cdot}S, \fg)^G$
are given Whitney topologies, then the map \eqref{eq:1.10} is a
homeomorphism.  In other words it is an isomorphism of
semi-topological spaces.
\end{theorem}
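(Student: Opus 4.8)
The plan is to push everything down to the slice via the slice theorem and then reduce the statement to an essentially formal computation of cokernels, leaving the Whitney-topology bookkeeping for last. First I would use the slice (tube) theorem to identify the tube $G$-equivariantly with the associated bundle $G\cdot S\cong G\times_H S$, where $(gh,s)\sim(g,hs)$ and $h\cdot[e,s]=[e,hs]$. Since the action is proper the stabilizer $H$ is compact, so I may fix an $\operatorname{Ad}(H)$-invariant splitting $\fg=\fh\oplus\fm$ and the resulting canonical principal connection on $G\to G/H$. Restricting a $G$-invariant object to the central fibre $S=\{[e,s]\}$ and decomposing along this connection gives two linear isomorphisms
\[
\Gamma\big(T(G\cdot S)\big)^{G}\;\xrightarrow{\ \cong\ }\;\Gamma(TS)^{H}\oplus C^{\infty}(S,\fm)^{H},
\]
\[
C^{\infty}(G\cdot S,\fg)^{G}\;\xrightarrow{\ \cong\ }\;C^{\infty}(S,\fh)^{H}\oplus C^{\infty}(S,\fm)^{H}.
\]
The first uses the $H$-equivariant bundle splitting $T(G\cdot S)|_{S}\cong TS\oplus(S\times\fm)$ into vertical and horizontal parts; the second uses that a $G$-equivariant map $G\cdot S\to\fg$ is determined by its $\operatorname{Ad}(H)$-equivariant restriction $\psi_{0}:S\to\fg=\fh\oplus\fm$. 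Under these identifications the canonical inclusion \eqref{eq:1.8} becomes $X\mapsto(X,0)$, the inclusion of the first summand.

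Next I would compute $\partial$ in these coordinates. For fixed $s$,
\[
\partial(\psi)([e,s])=\left.\frac{d}{dt}\right|_{t=0}\exp\big(t\,\psi_{0}(s)\big)\cdot[e,s],
\]
and writing $\psi_{0}(s)=\psi_{0}^{\fh}(s)+\psi_{0}^{\fm}(s)$ the $\fh$-part contributes the fundamental vector field $(\psi_{0}^{\fh}(s))_{S}\in T_{s}S$ (vertical), while the $\fm$-part contributes the horizontal vector $\psi_{0}^{\fm}(s)\in\fm$. No derivatives of $\psi_{0}$ appear, so $\partial$ corresponds exactly to $\partial_{S}\oplus\operatorname{id}$, where $\partial_{S}:C^{\infty}(S,\fh)^{H}\to\Gamma(TS)^{H}$, $\phi\mapsto\phi_{S}$, is the slice-level version of \eqref{eq:chain-cx}. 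Hence $\operatorname{im}(\partial)=\operatorname{im}(\partial_{S})\oplus C^{\infty}(S,\fm)^{H}$, and the inclusion of the first summand induces an isomorphism on cokernels, with inverse induced by the projection onto $\Gamma(TS)^{H}$. This identifies \eqref{eq:1.10} with the identity of $\Gamma(TS)^{H}/C^{\infty}(S,\fh)^{H}$ and proves invertibility.

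For the topological statement I would note that the two displayed isomorphisms are assembled from (a) the equivariant diffeomorphism of the slice theorem, (b) restriction to the closed submanifold $S$, and (c) one fixed smooth bundle isomorphism; each is continuous for the Whitney topologies, and with the product topologies on the right-hand sides they are homeomorphisms. The right-hand sides are then topological direct sums in which $\operatorname{im}(\partial)$ is the closed-off summand $\operatorname{im}(\partial_{S})\oplus C^{\infty}(S,\fm)^{H}$, so the projection to the first factor is continuous and open and descends to a homeomorphism $\big(\Gamma(TS)^{H}\oplus C^{\infty}(S,\fm)^{H}\big)/\operatorname{im}(\partial)\xrightarrow{\ \cong\ }\Gamma(TS)^{H}/C^{\infty}(S,\fh)^{H}$. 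Combining with the identification of cokernels above shows \eqref{eq:1.10} and its inverse are both continuous, i.e.\ a homeomorphism of semi-topological spaces.

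The genuinely delicate point—and the one I expect to be the main obstacle—is the continuity of the \emph{inverse} of the restriction map, i.e.\ that extending slice data $G$-invariantly over the (generally non-compact) tube is continuous in the strong Whitney topology. Bounding all derivatives of the extension over all of $G\cdot S$ in terms of data on $S$ forces one to confront the behaviour at infinity that the strong topology sees. I would handle this by using properness together with the local finiteness built into the strong topology: cover $G/H$ by a locally finite family of relatively compact sets, lift to a locally finite cover of the tube by translates of a fixed compact tube-piece, and verify the finitely-many-derivative bounds on each member using smoothness of the action maps. This equivariant bookkeeping, rather than the cokernel algebra, is the technical heart of the proof.
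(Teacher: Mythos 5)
Your argument is correct and is essentially the paper's: the paper likewise restricts to the slice and uses an $H$-equivariant splitting $\fg=\fh\oplus\fm$ to identify the tube complex with $\bigl(C^\infty(S,\fh)^H\xrightarrow{\partial}\Gamma(TS)^H\bigr)\oplus\bigl(C^\infty(S,\fm)^H\xrightarrow{\mathrm{id}}C^\infty(S,\fm)^H\bigr)$, so that the inclusion of the first summand is a continuous chain homotopy equivalence, of which Theorem~\ref{thm:1.11} is the induced statement on cokernels. The only difference is packaging --- the paper proves the stronger Theorem~\ref{thm:???} and decategorifies, while you compute cokernels directly --- and your closing concern about continuity of the extension-from-the-slice map is a genuine point that the paper simply asserts when it calls the restriction maps isomorphisms of semi-topological vector spaces.
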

We now make a reasonable assumption that if an invariant vector field
is generic than every vector field isomorphic to it is generic as
well. This is equivalent to requiring that the set of generic vector
fields $\Gamma (T(G{\cdot}S))^G$ lie in the preimage of a residual set
in the quotient $\Gamma (T(G{\cdot} S))^G/C^\infty (G{\cdot} S,
\fg)^G$.  With this assumption Theorem~\ref{thm:1.11} tells us that a
vector field $X\in \Gamma (TS)^H$ is generic if and only if its image
under the map \eqref{eq:1.8} is generic.  Thus Theorem~\ref{thm:1.11}
should be useful in equivariant bifurcation theory.  We plan to
address this elsewhere.

  Theorem~\ref{thm:1.11} is a decategorification of a stronger result,
  Theorem~\ref{thm:???}, which proves that the map \eqref{eq:1.8} is
  part of a chain homotopy equivalence of 2-term chain complexes of
  semi-topological vector spaces.

The paper is organized as follows.  In section~\ref{sec:2} we prove
Theorem~\ref{thm:1.0}.  In section~\ref{sec:3} we prove
Theorem~\ref{thm:1.4}, Theorem~\ref{thm:1.5} and Theorem~\ref{thm:1.100}.  In
section~\ref{sec:4} we prove Theorem~\ref{thm:???}.  In the last
section, section~\ref{sec:groupoid}, we explain where the notion of
isomorphic vector fields comes from and describe the connections with
groupoids and stacks.

\section{Isomorphic vector fields and their
  flows }\label{sec:2}

The goal of this section is to prove Theorem~\ref{thm:1.0}.
Throughout the section a Lie group $G$ with Lie algebra $\fg$ acts
properly on a manifold $M$.  We do not assume that $G$ is compact. For
a point $m\in M$ we have the evaluation map
\[
ev_m: G\to M,\quad ev_m (a) = a\cdot m\quad \textrm{ for all }a\in G.
\]
Dually, for any $a\in G$ we have a diffeomorphism
\[
a_M :M\to M, \quad a_M (m) = a\cdot m \quad \textrm{ for all } m \in M.
\]
Recall that every $\xi \in \fg$ we have the induced vector field $\xi_M\in \Gamma (TM)$ defined by 
\begin{equation}\label{eq:xi_M}
\xi_M (m) = \left. \frac{d}{dt}\right|_0 \exp(t\xi)\cdot m.
\end{equation}
Note that by the chain rule
\[
 \left. \frac{d}{dt}\right|_0 \exp(t\xi)\cdot m =  \left.\frac{d}{dt}\right|_0 (ev_m (\exp t\xi)) =
  T(ev_m)_0\,\, \xi.
\]
Thus 
\begin{equation}
\xi_M (m) = T(ev_m)_0 \xi.
\end{equation}
\begin{remark}
We trust that the reader will have no difficulty distinguishing between the induced vector fields $\xi_M$ \eqref{eq:xi_M} and $\psi_M$ \eqref{eq:1'}.
\end{remark}

\begin{proof}[Proof of Theorem~\ref{thm:1.0}]
  Recall that we have a manifold $M$ with an action of a Lie group
  $G$, $X,Y\in \Gamma(TM)^G$ are two $G$-invariant vector fields and
  $\psi: M\to \fg$ is a $G$-equivariant function with
\[
X = Y + \psi_M.
\]
Fix a point $z\in M$. Then 
\[
\gamma (t) := \Phi_t^Y (z)
\]
 is an integral
curve of $Y$ through $z$.  We'd like to prove the existence of a curve
$g(t)$ in $G$ which depends smoothly on $z$ so that
\[
\sigma (t):= g(t) \cdot \gamma (t)
\]
is an integral curve of the vector field $X$ through $z$.  It is
well-known (see, for example, \cite[Proposition~1.13.4]{DK}) that for
any smooth curve $\tau:I\to \fg$ in the Lie algebra $\fg$ there is a
unique smooth curve $g:I\to G$ defined on the {\sf same} interval $I$
so that $g(0) = e$ and $g(t)$ solves the ODE
\begin{equation}
\dot{g}(t) = TR_{g(t)}\, \tau (t).
\end{equation}
Here and elsewhere in the paper $R_a:G\to G$ denotes the right
multiplication by $a\in G$.  
In our case we set
\[
\tau(t) = \psi (\gamma (t)) = \psi (\Phi^Y_t (z))
\]
We now check that the solutions $g(t)$ of the ODE
\begin{equation}
\dot{g}(t) = TR_{g(t)} \psi (\Phi^Y_t (z)), \quad g(0) =e,
\end{equation}
which depend smoothly on $z$ define the desired time-dependent family
of maps $\{F_t:M\to G\}$.  For that it is enough to check that 
\[
\sigma(t) := g(t) \cdot \gamma (t)
\]
is the integral curve of $X$ through $z$.  By the chain rule
\[
\left. \frac{d}{dt}\right|_0 (g(t)\cdot \gamma (t)) = T \left(g(t)_M
\right) _{\gamma (t)} \dot{\gamma} (t) + T\left( ev_{\gamma
    (t)}\right) _{g(t)} \dot{g}(t).
\]
By definition of $\gamma(t)$,
\[
T \left(g(t)_M
\right) _{\gamma (t)} \dot{\gamma} (t) 
= T \left(g(t)_M \right) _{\gamma (t)} Y(\gamma(t)) = Y(g(t)\cdot \gamma (t)) =
Y(\sigma(t)),
\]
where the next to last equality holds by $G$-invariance of the vector field
$Y$. On the other hand,
\begin{align*}
  T\left( ev_{\gamma (t)}\right) _{g(t)} \dot{g}(t) &= T\left(
    ev_{\gamma
      (t)}\right) _{g(t)} T R_{g(t)}\psi(\gamma (t)) 
\quad\textrm{ by definition of } g(t)\\
  &=\left. \frac{d}{ds}\right|_0 \left(g(t) \exp
    (s\psi(\gamma(t)))\right)
  \cdot \gamma (t)\\
  &=\left. \frac{d}{ds}\right|_0 \left(g(t) \exp (s\psi(\gamma(t)))g(t)\inv\right) \cdot g(t) \cdot\gamma (t)\\
    &=\left. \frac{d}{ds}\right|_0 ( \exp (sAd(g(t))\psi(\gamma(t)))\cdot g(t) \cdot \gamma (t)\\
    &=\left. \frac{d}{ds}\right|_0 \left( \exp (s\psi(g(t)\cdot\gamma(t)))\,\right)\cdot (g(t) \cdot \gamma (t))\quad \textrm{ by equivariance of } \psi\\
    &= \psi _M (g(t)\cdot \gamma (t)) =  \psi _M (\sigma(t)).
\end{align*}
Therefore
\[
\left. \frac{d}{dt}\right|_0 \sigma (t) = Y(\sigma(t)) + \psi_M (\sigma(t)) = X (\sigma(t))
\]
and we are done.
\end{proof}
\begin{remark}
  Something about the proof of the theorem may look vaguely familiar
  to some readers.  Indeed it looks very much like the reconstruction
  argument in symplectic reduction theory.  See for instance the
  discussion at the bottom of p.~304 in \cite{AM}.  From the point of
  view of Theorem~\ref{thm:1.0} the reconstruction argument says that
  the horizontal lift of the reduced vector field with respect to some
  connection and the original Hamiltonian vector field are isomorphic.
\end{remark}

\begin{definition}
  Since the orbit $G\cdot x$ through a relative equilibrium $x$ of a
  vector field $X$ is preserved by the flow of $X$ it makes sense to
  define the relative equilibrium $x$ to be {\sf hyperbolic} if the
  manifold $G\cdot x$ is normally hyperbolic for the flow of $X$.
\end{definition}

\begin{corollary}
  Suppose $X,Y\in \Gamma (TM)^G$ are two isomorphic invariant vector
  fields. Then their flows induce the {\sf same} flow on the space of
  orbits $M/G$.  

Consequently isomorphic vector fields have the same
  relative equilibria and relative periodic orbits.  Moreover if $x$
  is a hyperbolic relative equilibrium for $X$ it is hyperbolic for
  any vector field $Y$ isomorphic to $X$.
\end{corollary}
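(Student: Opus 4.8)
The plan is to derive all three assertions from Theorem~\ref{thm:1.0}, which supplies a smooth family $\{F_t:M\to G\}$ with $\Phi^X_t(m)=F_t(m)\cdot\Phi^Y_t(m)$ wherever $\Phi^X_t(m)$ is defined. First I would note that the relation is symmetric: since $Y=X+(-\psi)_M$ with $-\psi$ again equivariant, Theorem~\ref{thm:1.0} applies with the roles of $X$ and $Y$ exchanged, so the two flows have the same maximal domain of definition through each point. Writing $\pi:M\to M/G$ for the orbit map, equivariance of $X$ and $Y$ lets $\Phi^X_t$ and $\Phi^Y_t$ descend to (partially defined) flows $\bar\Phi^X_t,\bar\Phi^Y_t$ on $M/G$. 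Because $F_t(m)\in G$, the points $\Phi^X_t(m)$ and $\Phi^Y_t(m)$ lie on one $G$-orbit, hence $\pi(\Phi^X_t(m))=\pi(\Phi^Y_t(m))$; as $\pi$ is surjective this gives $\bar\Phi^X_t=\bar\Phi^Y_t$, proving the first assertion.

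For the second assertion I would use the standard characterisations in terms of the orbit-space flow. A point $x$ is a relative equilibrium of an invariant field exactly when the orbit $G\cdot x$ is flow-invariant, equivalently when $\pi(x)$ is a fixed point of the induced flow (invariance of $X$ forces a vector tangent to $G\cdot x$ at $x$ to be tangent along the whole orbit, so the orbit is invariant); and $x$ lies on a relative periodic orbit exactly when $\pi(x)$ is a periodic point. Since $\bar\Phi^X=\bar\Phi^Y$ by the first part, the fixed points and periodic points of the two orbit-space flows coincide, and the claim follows.

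The third assertion is the real content. Here I would rewrite Theorem~\ref{thm:1.0} as $\Phi^X_t=A_t\circ\Phi^Y_t$, where $A_t:=\Phi^X_t\circ\Phi^Y_{-t}$ satisfies $A_t(p)=F_t(\Phi^Y_{-t}(p))\cdot p$; thus each $A_t$ is a \emph{gauge transformation}, carrying every $G$-orbit into itself. Fix a relative equilibrium $x$, set $N:=G\cdot x$ (invariant for both flows by the second part), and choose a $G$-invariant Riemannian metric on $M$ (available since the action is proper), inducing a $G$-invariant fibre metric on the normal bundle $\nu_N=T_NM/TN$. Linearising along $N$ and passing to $\nu_N$, the three maps descend to normal linearised flows $\bar\Phi^X_t,\bar\Phi^Y_t$ and a bundle map $\bar A_t$ with $\bar\Phi^X_t=\bar A_t\circ\bar\Phi^Y_t$. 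The key computation is that, because the base point sits on $N$, the derivative of $A_t$ in a normal direction equals the linearised $G$-action of the element $F_t(\cdot)$ plus a term tangent to $N$ that dies in the quotient; hence $\bar A_t$ is fibrewise the linearised action of a group element, and therefore a fibrewise \emph{isometry} for the invariant metric.

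Two consequences then finish the argument. First, $\|\bar\Phi^X_t v\|=\|\bar A_t\bar\Phi^Y_t v\|=\|\bar\Phi^Y_t v\|$ for every normal vector $v$ and all $t$, so the two normal linearised flows have identical growth. Second, assuming $N$ is normally hyperbolic for $X$, the $G$-equivariance of the flow of $X$ makes its normal stable and unstable bundles $E^{s}(X),E^{u}(X)\subset\nu_N$ $G$-invariant; since $\bar A_t$ acts by the corresponding group element one checks that these subbundles are also $\bar\Phi^Y_t$-invariant, so the splitting $\nu_N=E^{s}(X)\oplus E^{u}(X)$ is hyperbolic for $\bar\Phi^Y$ with the \emph{same} exponential rates, and the reverse implication is identical by symmetry. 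The delicate point I expect to be the crux is the domination condition in the definition of normal hyperbolicity: one must check the gauge term tangent to $N$ cannot spoil domination. I would handle this by showing the tangential Lyapunov exponents along $N$ vanish for both fields — the motion along a group orbit through a relative equilibrium is a drift by a one-parameter subgroup, which I would make isometric by a suitable choice of $G$-invariant metric on $N$ — so that domination reduces to the shared normal dichotomy.
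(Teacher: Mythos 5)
Your proposal is correct and takes exactly the route the paper intends: everything is deduced from the gauge relation $\Phi^X_t(m)=F_t(m)\cdot\Phi^Y_t(m)$ of Theorem~\ref{thm:1.0}, which immediately gives $\pi\circ\Phi^X_t=\pi\circ\Phi^Y_t$ and hence the coincidence of the induced flows, relative equilibria, and relative periodic orbits. The paper in fact states the corollary without any written proof, so your elaboration of the normal-hyperbolicity claim --- observing that the gauge map $A_t=\Phi^X_t\circ\Phi^Y_{-t}$ descends to a fibrewise isometry of $\nu_{G\cdot x}$ for an invariant metric, so the two normal linearised flows have identical growth and share the $G$-invariant stable/unstable splitting --- is a correct and welcome filling-in of details the author leaves implicit.
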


\begin{remark}
  The first part of the corollary has a converse if the action of $G$
  on $M$ is free and proper: if $X$ and $Y$ are two $G$-invariant
  vector fields inducing the same flow on the orbit space $B:= M/G$
  then $X$ and $Y$ are isomorphic.

  The argument proceeds as follows. Since the action of $G$ is free and proper,  the orbit space $B:= M/G$ is a manifold and the
  orbit map $\pi:M\to B$ makes $M$ into a principal $G$-bundle over
  $M$.  Then two $G$-invariant vector fields $X$ and $Y$ induce the same
  flow on $B$ if and only if they are $\pi$-related to the same vector
  field on $B$ if and only if $T\pi (X-Y) =0$ if and only if for every
  $m\in M$ there is a vector $\psi(m)\in \fg$ so that $X(m) -Y(m) =
  (\psi(m))_M (m)$.  It is also easy to see that the function $\psi:M\to \fg$
   is smooth:  
\[
\psi(m) = \alpha _m\, (X(m) - Y(m)),
\]
where $\alpha \in
  \Omega^1(M, \fg)^G$ is a connection 1-form  (any choice of $\alpha$ will do). 
Thus $X$ and $Y$ are isomorphic with the isomorphism provided by the
$G$-equivariant map $\psi$ defined above.
\end{remark}

\section{Local normal form for an invariant vector field} \label{sec:3}

Once again let $S$ denote a slice through a point $x\in M$ for a
proper action of a Lie group $G$ on a manifold $M$ and $H$ denote the
stabilizer of $x$.  Then
\[
G\cdot S := \{g\cdot y\mid g\in G, y\in S\}
\]
is an open $G$-invariant neighborhood of the orbit $G\cdot x$, which
is often called a tube.  We have a $G$-equivariant diffeomorphism $\varphi$ from
the associated bundle
\[
G\times ^H S := (G\times S)/H
\]
to the tube $G\cdot S$ making the diagram
\[\xy
(-10,10)*+{G\times^H S}="1";
(20,10)*+{G\cdot S}="2";
(-10,-7)*+{G/H}="3";
(20,-7)*+{G\cdot x}="4";
 {\ar@{->}^{\varphi} "1";"2"};
 {\ar@{->}^{\pi} "2";"4"};
 {\ar@{->}_{} "1";"3"};{\ar@{->}_{\pi} "3";"4"};
\endxy\]
commute.  Here 
the left vertical map is 
\[
[g,s]\mapsto gH,
\]
the bottom horizontal map is 
\[
gH\mapsto g\cdot x
\]
and the right vertical map $\pi:G{\cdot} S\to G\cdot x$ is given by
\[
\pi (g\cdot y) = g\cdot x;
\] 
it is well-defined.
\begin{remark}
  The projection $\pi:G\cdot S \to G\cdot x$ very much depends on the
  choice of the slice $S$: the fiber of $\pi$ above $g\cdot x\in
  G\cdot x$ is the submanifold $g\cdot S$, which is a slice through
  $g\cdot x$.  A choice of a different slice $S'$ through $x$ with
  $G{\cdot }S' = G\cdot S$ defines a {\sf different} submersion
  $\pi':G{\cdot}S' \to G\cdot x$, even though it is given by a
  seemingly identical formula:
\[
\pi'(g\cdot y') = g\cdot x
\]
for all $g\in G$, $y' \in S'$.  In particular $\pi$ and $\pi'$ have
different fibers.
\end{remark}

\begin{lemma}\label{lem:3.2}
  Let $M$ be a manifold with an action of a Lie group $G$, $H$ the
  stabilizer of a point $x\in M$ and $S$ a slice through $x$ for the
  action of $G$.  A choice of an $H$-equivariant splitting 
\begin{equation}\label{eq:split}
\fg = \fh \oplus \fm
\end{equation}
of the Lie algebra $\fg$ of $G$ into the Lie algebra $\fh$ of $H$ and
a complement $\fm$ gives rise to an isomorphism of vector spaces
\[
\Gamma (T(G{\cdot}S))^G \to \Gamma (TS)^H \oplus C^\infty (S,\fm)^H.
\]
\end{lemma}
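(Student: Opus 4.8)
The plan is to reduce everything to the associated bundle and then to a pointwise piece of linear algebra. Using the $G$-equivariant diffeomorphism $\varphi\colon G\times^H S\to G\cdot S$ to identify $\Gamma(T(G\cdot S))^G$ with $\Gamma(T(G\times^H S))^G$, it suffices to build an isomorphism $\Gamma(T(G\times^H S))^G\to \Gamma(TS)^H\oplus C^\infty(S,\fm)^H$. Write $q\colon G\times S\to G\times^H S$ for the quotient map, $\iota\colon S\to G\times^H S$, $\iota(s)=[e,s]$, for the embedded slice, and $\lambda_a\colon G\times^H S\to G\times^H S$, $\lambda_a[g,s]=[ag,s]$, for the diffeomorphism implementing the $G$-action (so $\lambda_a\lambda_b=\lambda_{ab}$). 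For $\eta\in\fh$ let $\eta_S$ be the vector field on $S$ induced by $\eta$.

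The heart of the matter is a chart along the slice. Identifying $T_eG=\fg$, the differential $Tq_{(e,s)}\colon\fg\oplus T_sS\to T_{[e,s]}(G\times^H S)$ is surjective with kernel $\{(-\eta,\eta_S(s)):\eta\in\fh\}$. Since $\fh\cap\fm=0$ and dimensions match, restricting $Tq_{(e,s)}$ to $\fm\oplus T_sS$ gives a linear isomorphism $\kappa_s\colon \fm\oplus T_sS\to T_{[e,s]}(G\times^H S)$. Hence a $G$-invariant vector field $W$ produces, for each $s$, a pair $(A(s),V(s)):=\kappa_s^{-1}\bigl(W([e,s])\bigr)$ with $A(s)\in\fm$ and $V(s)\in T_sS$, and I define the map of the Lemma by $W\mapsto(V,A)$. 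Smoothness of $A$ and $V$ is inherited from $W$ and from the fact that $\kappa_s$ varies smoothly in $s$.

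Next I would verify that $(V,A)$ lands in $\Gamma(TS)^H\oplus C^\infty(S,\fm)^H$. The key identity is $\lambda_h\circ\iota=\iota\circ h_S$, which follows from $[h,s]=[e,hs]$. Comparing the two charts across this identity, a direct computation using $TR_{h^{-1}}TL_h=Ad(h)$ yields the intertwining relation
\[
T\lambda_h\circ\kappa_s=\kappa_{hs}\circ\bigl(Ad(h)\oplus T(h_S)_s\bigr).
\]
This is the one place the choice of splitting is used: the right-hand side is well-formed precisely because $Ad(h)\fm=\fm$, i.e.\ because the splitting is $H$-equivariant. Feeding $W$ through this relation, together with $G$-invariance $W(\lambda_h p)=T\lambda_h\,W(p)$, gives $A(hs)=Ad(h)A(s)$ and $V(hs)=T(h_S)_sV(s)$, so indeed $A\in C^\infty(S,\fm)^H$ and $V\in\Gamma(TS)^H$.

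Finally I would establish bijectivity and linearity; linearity is clear since each $\kappa_s$ is linear. Injectivity is immediate: if $(V,A)=0$ then $W$ vanishes on $\iota(S)$, and as $G\cdot\iota(S)=G\times^H S$, $G$-invariance forces $W\equiv0$. For surjectivity I would run the construction in reverse, setting $W([g,s]):=T\lambda_g\,\kappa_s\bigl(A(s),V(s)\bigr)$; the intertwining relation together with the $H$-equivariance of $(A,V)$ and $\lambda_{gh^{-1}}\lambda_h=\lambda_g$ shows this is independent of the chosen representative, hence a well-defined smooth $G$-invariant vector field mapping to $(V,A)$. I expect the main obstacle to be purely the bookkeeping in the $H$-equivariance step—correctly tracing how $Ad(h)$ enters through the right translation and confirming that $H$-invariance of $\fm$ is exactly what keeps the $\fm$-component inside $C^\infty(S,\fm)^H$; the remaining verifications are formal.
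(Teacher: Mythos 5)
Your proof is correct and is essentially the paper's argument: the isomorphism $\kappa_s$ identifies $T_{[e,s]}(G\times^H S)$ with $T_sS\oplus\{\xi_M([e,s]):\xi\in\fm\}$, which is exactly the vertical/horizontal decomposition the paper obtains from the $G$-invariant connection on $G\times^H S\to G/H$ induced by the splitting $\fg=\fh\oplus\fm$. The only difference is one of packaging: you build the decomposition and check $H$-equivariance, well-definedness and bijectivity by hand via the quotient map $q$, where the paper invokes the associated-bundle connection and the restriction-to-the-slice isomorphism.
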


\begin{proof}
  It will be convenient for notational purposes to assume that
  $G{\cdot S} =M$.  Then, since $S$ is a global slice, any
  $G$-invariant vector field $X$ on $M$ is uniquely determined by its
  restriction to $S$.  Thus the restriction map
\begin{equation}
\Gamma (TM)^G \to \Gamma (TM)^G |_S, \quad X\to X|_S
\end{equation}
is an isomorphism of vector spaces.  

The splitting \eqref{eq:split} defines a $G$-invariant connection on
the principal $H$-bundle $G\to G\cdot x$ and consequently a
$G$-invariant connection on the associated bundle $M \simeq G\times ^H
S \to G\cdot x$.  Hence any $G$-invariant vector field $X$ on
$M$ can be uniquely written as a sum
\begin{equation}\label{eq:3.5}
X = X^\mathrm{v}+ X^\mathrm{h}
\end{equation}
of two $G$-invariant vector fields with $X^\mathrm{v}$ being tangent
to the fibers of $\pi:M\to G\cdot x$ and $ X^\mathrm{h}$ being tangent
to the horizontal distribution $\mathcal{H}\subset TM$ defined by the
splitting \eqref{eq:split}. It follows that the restriction
\[
X^S: = X^\mathrm{v}|_S
\]
is tangent to $S$. It is easy to see that $X^S$ is $H$-invariant.

The restriction $\cH|_S$ of the horizontal distribution to the slice
is trivial: an isomorphism is given by 
\begin{equation}
S\times \fm \to \cH|_S, \quad (y,\xi)\mapsto \xi_M (y).
\end{equation}
It follows that the space of $H$-equivariant sections of $\cH|_S \to
S$ is isomorphic to the space of $H$-equivariant function $C^\infty
(S, \fm)^H$:
\begin{equation}
  \Gamma(\cH|_S )^H \simeq  C^\infty (S, \fm)^H,
\end{equation}
and the result follows.
\end{proof}

Lemma~\ref{lem:3.2} tells us that a choice of a slice $S$ and of a
splitting \eqref{eq:split} defines a surjective linear map 
\begin{equation}\label{eq:3.8}
  \fp: \Gamma (TG{\cdot}S)^G \to C^\infty (S, \fm)^H, 
\quad
  X\mapsto \psi^S_X.
\end{equation}

\begin{remark}\label{rmrk:3.11}
  The function $\psi_X^S \in C^\infty (S, \fm)^H$ and the vector field
  $X^\mathrm{h}$ in \eqref{eq:3.5} are, of course, directly related.
  Indeed, define $\Psi^X \in C^\infty (G{\cdot}S), \fg)^G$ by
\[
\Psi_X (g\cdot y) =Ad(g) \psi_X^S (y).
\]
Then
\[
X^\mathrm{h} = (\Psi_X)_M (m)
\]
for all $m\in G{\cdot} S$, and the decomposition  \eqref{eq:3.5} reads:
\begin{equation}\label{eq:3.12}
X (g\cdot y) = T (g_M)_y \, X^S (y) +  (\Psi_X)_M (g\cdot y)
\end{equation}
for all $y\in S$, $g\in G$.
\end{remark}

\begin{lemma}\label{lem:2.3}
  Let $S, S'$ be two different slices through the same point $x$ for a proper
  action of a Lie group $G$ on a manifold $M$.  Let $H$ denote the
  stabilizer of $x$, as before.  There exists an $H$-equivariant
  $G$-valued function $f$ defined on a neighborhood $U$ of $x$ in $S$
  so that
\[
\varphi:U\to S', \quad \varphi(y) = f(y)\cdot y
\]
is an $H$-equivariant open embedding.  Moreover we may assume  $f$ takes values in $\exp{\fm}$, where $\fg= \fh\oplus \fm$ is an $H$-equivariant splitting.
\end{lemma}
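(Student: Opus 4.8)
The plan is to realize $\varphi$ as the restriction to $S$ of the inverse of an auxiliary ``exponential slice'' chart built out of $S'$. Fix the $H$-equivariant splitting $\fg = \fh \oplus \fm$; since the action is proper the stabilizer $H$ is compact, so such a splitting exists and $\fm$ is $Ad(H)$-invariant. Consider the smooth map
\[
\Phi: \fm \times S' \to M, \qquad \Phi(\zeta, s') = \exp(\zeta)\cdot s'.
\]
First I would compute its differential at $(0,x)$, which is $(\zeta, w) \mapsto \zeta_M(x) + w$: because $\fm$ is a complement to $\fh = \ker(\zeta \mapsto \zeta_M(x))$, the assignment $\zeta \mapsto \zeta_M(x)$ is an isomorphism onto $T_x(G\cdot x)$, while $w$ ranges over $T_xS'$. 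Since $S'$ is a slice, $T_xM = T_x(G\cdot x)\oplus T_xS'$, so $T_{(0,x)}\Phi$ is an isomorphism and $\Phi$ restricts to a diffeomorphism from a neighborhood of $(0,x)$ onto a neighborhood $W$ of $x$ in $M$.

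Next I would record the equivariance. Letting $H$ act on $\fm \times S'$ by $h\cdot(\zeta, s') = (Ad(h)\zeta, h \cdot s')$ --- which preserves the two factors because $Ad(H)\fm = \fm$ and $S'$ is $H$-invariant --- one checks directly that $\Phi(Ad(h)\zeta, h \cdot s') = h\cdot \Phi(\zeta,s')$, so $\Phi$ is $H$-equivariant. As $(0,x)$ is $H$-fixed and $H$ is compact, I may shrink $W$ to be $H$-invariant, and then $\Phi\inv$ is $H$-equivariant as well. Writing $\Phi\inv = (\zeta(\cdot), \sigma(\cdot))$ with $\zeta: W \to \fm$ and $\sigma: W \to S'$, equivariance reads $\zeta(h \cdot m) = Ad(h)\zeta(m)$ and $\sigma(h\cdot m) = h \cdot \sigma(m)$, while by construction $\exp(\zeta(m))\cdot\sigma(m) = m$.

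I would then set $U := S \cap W$ (an $H$-invariant neighborhood of $x$ in $S$), define $f(y) := \exp(-\zeta(y))$, and $\varphi(y) := f(y)\cdot y$. By the defining identity, $\varphi(y) = \exp(-\zeta(y))\cdot y = \sigma(y) \in S'$; the map $f$ takes values in $\exp\fm$, and $f(h\cdot y) = \exp(-Ad(h)\zeta(y)) = h\,\exp(-\zeta(y))\,h\inv = h f(y) h\inv$, which is precisely the $H$-equivariance of $f$ that yields $\varphi(h\cdot y) = h\cdot\varphi(y)$. It remains to see that $\varphi$ is an open embedding after shrinking $U$. For this I would compute $T_x\varphi$: since $f(x) = e$, the chain rule gives $T_x\varphi(v) = (T_xf\,v)_M(x) + v$ with $T_xf\,v \in \fm$, so the first summand lies in $T_x(G\cdot x)$. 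Projecting along $T_x(G\cdot x)$ onto $T_xS'$, and using $\varphi(U)\subseteq S'$, shows $T_x\varphi$ equals the projection $T_xS \to T_xS'$ along the orbit, which is an isomorphism because both $T_xS$ and $T_xS'$ are complements of $T_x(G\cdot x)$. Hence $\varphi$ is a local diffeomorphism at $x$; intersecting a neighborhood on which it is a diffeomorphism with its $H$-translates (again using compactness of $H$) produces the required $H$-invariant $U$ on which $\varphi$ is an $H$-equivariant open embedding.

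The \emph{main obstacle} I anticipate is bookkeeping rather than conceptual: arranging the neighborhoods ($W$, then $U$) to be simultaneously $H$-invariant and small enough to guarantee injectivity, which is exactly where compactness of the stabilizer $H$ (coming from properness of the action) is indispensable. The one genuinely geometric input is the slice property $T_xM = T_x(G\cdot x)\oplus T_xS = T_x(G\cdot x)\oplus T_xS'$, which is used twice --- once to make $\Phi$ a local diffeomorphism, and once to identify $T_x\varphi$ with an isomorphism of complements to the orbit.
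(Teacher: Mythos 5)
Your proof is correct and follows essentially the same route as the paper's: both exploit the $H$-equivariant splitting $\fg=\fh\oplus\fm$ to build a product chart for the tube near $x$ and then obtain $f$ with values in $\exp\fm$ from the inverse function theorem, with compactness of $H$ handling the invariant shrinking of neighborhoods. The only (cosmetic) difference is that you build the chart from $S'$ (as $\fm\times S'\to M$, $(\zeta,s')\mapsto \exp(\zeta)\cdot s'$) and restrict the resulting projection to $S$, whereas the paper trivializes the tube as $\cO\times S$ with $\cO\subset\exp(\fm)$ and inverts the projection $pr_2|_{S'}:S'\to S$.
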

\begin{proof}
  We may assume that $M= G \times ^H S$ and $S'$ is a slice
  through $[1,x]\in G\times ^H S$, where $[1,x]$ is the $H$-orbit of
  $(1,x)\in G\times S$.  It is a standard fact that an $H$-invariant
  splitting $\fg = \fm \oplus \fh$ defines an $H$-equivariant section
  $s$ of the principal $H$ bundle $G\to G/H$.  Explicitly the section
  is given by the formula
\[
s(gH) = g
\]
for all $g\in \exp (\fm)$ sufficiently close to $1$, say for $g$ in an
$H$-invariant neighborhood $\cO$ of $1\in \exp(\fm)$.  

The section $s$ trivializes the associated bundle $\pi: G\times^H S\to G/H= G{\cdot} x$.
Explicitly the trivialization is the map 
\begin{equation}
\cO\times S\to \pi\inv (\cO G), \quad (g, y)\mapsto [g,y] = g{\cdot}[1,y].
\end{equation}
We may assume that $S'\subset \cO\times S$ and that at $(1,x) \in \cO\times S$ we have
\[
T_{(1,x)}(\cO \times S) =   T_{(1,x)}\cO\oplus T_{(1,x)}S'. 
\]
Consequently the differential of $pr_2| _{S'}:S'\to S$ at $(1,x)$ is
an isomorphism, hence a diffeomorphism from a neighborhood $U'$ of
$(1,x)\in S'$ to a neighborhood $U$ of $x$ in $S$.  Since $pr_2$ is
$H$-equivariant, $pr_2| _{S'}$ is $H$-equivariant as well, and we may
take $U$, $U'$ to be $H$-invariant.  We set
\[
\varphi = (pr_2|_{U'})\inv :U\to U'.
\]
It is the desired map, and it's of the form
\[
\varphi(y) = (f(y), y)
\]
for some $H$-equivariant map $f:U\to \cO \subset \exp (\fm)$.

When we identify $G\times ^H S$ with $G{\cdot}S$ the map $\varphi$
takes the form
\[
\varphi (y) = f(y)\cdot y,
\] 
as desired.
\end{proof}

\begin{lemma}\label{lem:3.15}
  Let $\varphi:S\to S'$, $\varphi(y) = f(y){\cdot}y$ be the
  equivariant map of Lemma~\ref{lem:2.3}.  Denote the left
  multiplication by an element $a\in G$ by $L_a$. For any $y\in S$,
  $v\in T_yS$,
\begin{equation}\label{eq:3.13}
  T\varphi_y (v) = T (f(y)_M) _y 
\left[(TL_{f(y)\inv} (Tf_y (v)))_M (\varphi(y)) + v\right].
\end{equation}
\end{lemma}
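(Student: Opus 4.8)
The plan is to recognize $\varphi$ as the composite of $y\mapsto(f(y),y)$ with the action map and to differentiate by the chain rule. Write $\mathrm{act}\colon G\times M\to M$, $\mathrm{act}(g,m)=g\cdot m$, for the action, so that $\varphi=\mathrm{act}\circ(f,\mathrm{id}_S)$. For $v\in T_yS$ the chain rule then gives $T\varphi_y(v)=T\,\mathrm{act}_{(f(y),y)}\bigl(Tf_y(v),\,v\bigr)$. First I would record the elementary ``Leibniz rule'' for the differential of the action map: the differential of a map on a product is the sum of its two partial differentials, and freezing the second variable recovers $ev_m$ while freezing the first recovers $a_M$, so that
\[
T\,\mathrm{act}_{(a,m)}(w,u)=T(ev_m)_a(w)+T(a_M)_m(u)
\]
for $a\in G$, $m\in M$, $w\in T_aG$ and $u\in T_mM$. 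Applying this at $a=f(y)$, $m=y$, $w=Tf_y(v)$, $u=v$ yields
\[
T\varphi_y(v)=T(ev_y)_{f(y)}\bigl(Tf_y(v)\bigr)+T\bigl(f(y)_M\bigr)_y(v).
\]

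The one substantive step is to rewrite the first summand in terms of an induced vector field, and the plan is to trivialize $T_{f(y)}G$ by left translation to the identity. Writing $a:=f(y)$ and factoring $g=a\cdot(a\inv g)$ gives $ev_y(g)=a_M\bigl(ev_y(a\inv g)\bigr)$. Differentiating this at $g=a$ in the direction $w:=Tf_y(v)$ and setting $\xi:=TL_{a\inv}(w)\in\fg$, the chain rule together with $ev_y(e)=y$ and the defining formula $\xi_M(y)=T(ev_y)_e(\xi)$ gives
\[
T(ev_y)_{f(y)}\bigl(Tf_y(v)\bigr)=T(a_M)_y\bigl(\xi_M(y)\bigr),\qquad \xi=TL_{f(y)\inv}\bigl(Tf_y(v)\bigr).
\]
Substituting this back and factoring out the common linear map $T(f(y)_M)_y$ produces
\[
T\varphi_y(v)=T\bigl(f(y)_M\bigr)_y\bigl[\xi_M(y)+v\bigr],\qquad \xi=TL_{f(y)\inv}\bigl(Tf_y(v)\bigr),
\]
which is the identity claimed in \eqref{eq:3.13}, modulo the choice of base point for the induced field discussed next.

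The point that needs care --- and what I expect to be the main obstacle --- is the base point at which the induced field is recorded, and correspondingly whether $L$ or $R$ appears. The computation above naturally produces $\xi_M$ based at $y$ and wrapped in $T(f(y)_M)_y$; to display the induced field at $\varphi(y)$ instead one uses the equivariance identity $T(a_M)_m\bigl(\zeta_M(m)\bigr)=\bigl(Ad(a)\zeta\bigr)_M(a\cdot m)$ for induced vector fields, together with $Ad(a)\circ TL_{a\inv}=TR_{a\inv}$ on $T_aG$. Matching these equivalent presentations is exactly what pins down the placement of $L_{f(y)\inv}$, the argument $\varphi(y)$, and the outer differential $T(f(y)_M)_y$ in the stated formula. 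Everything else is the routine two-variable chain rule, so the real work is bookkeeping --- keeping the tangent spaces and the left/right trivializations straight; a quick check against the constant case $f\equiv a$, where $Tf=0$ and both sides collapse to $T(a_M)_y(v)$, would confirm that no stray factors have crept in.
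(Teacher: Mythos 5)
Your argument is correct and is essentially the paper's own proof: both come down to the Leibniz rule for the differential of the action map plus the identity $T(ev_y)_a(w)=T(a_M)_y\,\bigl(TL_{a\inv}w\bigr)_M(y)$, which you derive exactly as the paper does by left-translating to the identity. Note that your computation (like the paper's own proof and its later use in Lemma~\ref{lem:3.22}) produces the induced field based at $y$ rather than at $\varphi(y)$ as displayed in \eqref{eq:3.13} --- the $\varphi(y)$ there is evidently a misprint, since $T(f(y)_M)_y$ must act on $T_yM$ --- and your closing discussion of the base-point bookkeeping identifies and resolves this correctly.
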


\begin{remark} In \eqref{eq:3.13} $f(y)\in G$, $f(y)_M:M\to M$ is the
corresponding diffeomorphism, and $T (f(y)_M) _y: T_y M\to
T_{f(y)}M$ is its differential.  Similarly $Tf_y: T_yM\to T_{f(y)}G$ is
the differential of $f$ at $y$, $TL_{f(y)\inv} (Tf_y (v)) \in T_1 G =
\fg$, and $(TL_{f(y)\inv} (Tf_y (v)))_M (\varphi(y))$ is the value at
$\varphi(y)$ of the vector field on $M$ induced by the vector
$TL_{f(y)\inv} (Tf_y (v))\in \fg$.
\end{remark}

\begin{proof}[Proof of Lemma~\ref{lem:3.15}] The derivative $T
  (ev_y)_g$ of the evaluation map 
\[
ev_y:G\to M, \quad ev_y (g) = g\cdot y
\]
at a point $g\in G$ can be computed as follows:  For $w\in T_g G$ set 
\[
z= TL_{g\inv} w\in T_eG = \fg.
\]
Then $w = TL_g z$ and 
\begin{equation}\label{eq:3.19}
  T(ev_y) w = \left.\frac{d}{dt}\right|_0 ev _y (L_g (\exp tz)) =
  \left.\frac{d}{dt}\right|_0 g(\exp tz) y= T(g_M)_y (T L_{g\inv}
  w)_M (y).
\end{equation}
Next choose a curve 
$\gamma:I\to M$ with $\gamma (0) = y$, $\dot{\gamma} (0)  = v$.
Then
\begin{align*}
  T\varphi_y (v)& = \left.\frac{d}{dt}\right|_0 \varphi (\gamma (t)) =
  \left.\frac{d}{dt}\right|_0 (f(\gamma(t)){\cdot}\gamma(t))=\\
  &= T(ev _{\gamma (0)} ) \left.\frac{d}{dt}\right|_0 f(\gamma(t)) 
+ T(f(\gamma (0)))_M (\dot{\gamma}(0))\\
  &= T(f(y)_M)_y \left( (TL_{g\inv}Tf_y (v))_M (y) + v\right).
\quad \textrm{ by \eqref{eq:3.19}}
%
\end{align*}
\end{proof}
\begin{proof}[Proof of Theorem~\ref{thm:1.4}]
We first address the dependence of the projection
\[
\fq: \Gamma (T(G{\cdot}S))^G\to \Gamma (TS)^H
\] 
on the choice of the splitting $\fg=\fh\oplus \fm$.

\begin{lemma}
Let 
\begin{equation} \label{eq:3.17}
\fg = \fh \oplus \fm_1 = \fh \oplus \fm_2
\end{equation}
be two $H$-equivariant splittings and 
\begin{equation}
\fq_1,\fq_2: \Gamma (T(G{\cdot}S))^G\to \Gamma (TS)^H
\end{equation}
the two corresponding projections. Then for any $X\in \Gamma
(T(G{\cdot}S))^G$ the two vector fields
\[
X_i := \fq_i (X), \quad i=1,2
\]
are isomorphic in the sense of Definition~\ref{def:iso-vect}: there is
map $\varphi \in C^\infty (S, \fh)^H$ with
\[
X_1 -X_2 = \varphi_S.
\]
\end{lemma}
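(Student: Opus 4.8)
The plan is to evaluate everything along the slice $S$, where Lemma~\ref{lem:3.2} supplies an explicit pointwise decomposition of the tangent space, and then to show that the difference of the two vertical (slice) components is induced by an $\fh$-valued function.

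First I would fix $y\in S$ and recall from Lemma~\ref{lem:3.2} and Remark~\ref{rmrk:3.11} (equation \eqref{eq:3.12} at $g=e$) that each splitting $\fg=\fh\oplus\fm_i$ yields
\[
X(y) = X_i(y) + (\chi_i(y))_M(y),\qquad X_i:=\fq_i(X)\in\Gamma(TS)^H,\quad \chi_i\in C^\infty(S,\fm_i)^H,
\]
where $X_i(y)\in T_yS$ and $(\chi_i(y))_M(y)$ is horizontal with respect to the $i$-th splitting. Subtracting the two expressions and using that $\xi\mapsto\xi_M(y)=T(ev_y)_e\,\xi$ is linear, I obtain
\[
X_1(y)-X_2(y) = (\eta(y))_M(y),\qquad \eta(y):=\chi_2(y)-\chi_1(y)\in\fg.
\]

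The crux is to show that $\eta$ actually takes values in $\fh$. For this I would decompose $\eta(y)=\alpha(y)+\beta(y)$ with $\alpha(y)\in\fh$ and $\beta(y)\in\fm_1$ using the first splitting. Because the slice $S$ is $H$-invariant, for $\xi\in\fh$ the curve $t\mapsto\exp(t\xi)\cdot y$ stays in $S$, so $\xi_M(y)\in T_yS$; in particular $(\alpha(y))_M(y)\in T_yS$, while $(\beta(y))_M(y)\in\cH_1|_y$ by construction of the horizontal distribution. Since $X_1(y)-X_2(y)\in T_yS$ and $T_yM=T_yS\oplus\cH_1|_y$, the horizontal part must vanish, $(\beta(y))_M(y)=0$, and the trivialization $\fm_1\to\cH_1|_y$, $\xi\mapsto\xi_M(y)$, of Lemma~\ref{lem:3.2} forces $\beta(y)=0$. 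Hence $\eta(y)=\alpha(y)\in\fh$. I then set $\varphi:=\eta:S\to\fh$; it is smooth and $H$-equivariant because $\chi_1,\chi_2$ are (and $Ad(H)$ preserves $\fh$), so $\varphi\in C^\infty(S,\fh)^H$. Finally, since $\varphi$ is $\fh$-valued and the $H$-action on $S$ is the restriction of the $G$-action, $\varphi_M|_S$ agrees with the vector field $\varphi_S$ induced by the $H$-action on $S$, giving $X_1-X_2=\varphi_S$, which is the desired isomorphism in the sense of Definition~\ref{def:iso-vect}.

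I expect the step identifying the values of $\eta$ with $\fh$ to be the main obstacle: it is where the geometry of the slice enters decisively, through the complementary roles of $\fh$ (inducing fields tangent to $S$) and $\fm_1$ (inducing horizontal fields) inside $T_yM$. The remaining verifications---linearity of the induced-vector-field construction, smoothness of $\varphi$, and its $H$-equivariance---are routine once this point is established.
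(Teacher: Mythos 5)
Your proof is correct, and it starts from the same place as the paper's: evaluate the two decompositions of $X$ along $S$ (equation \eqref{eq:3.12} at $g=e$) and reduce the lemma to showing that the difference $\eta=\chi_2-\chi_1$ of the two $\fm_i$-valued functions actually takes values in $\fh$. Where you genuinely diverge is in how you establish that last point. The paper writes $\fm_2$ as the graph of an $H$-equivariant linear map $B:\fm_1\to\fh$ and checks, using the injectivity of $T(ev_x)_0|_{\fm_i}$, that $\psi_2=\psi_1+B(\psi_1)$, so that the difference equals $B\circ\psi_1$ and is $\fh$-valued by construction. You instead argue pointwise in $T_yM$: since $(\eta(y))_M(y)=X_1(y)-X_2(y)$ lies in $T_yS$, and since $\fh$ induces vectors tangent to $S$ while $\fm_1$ induces vectors in the complementary horizontal space $\cH_1|_y$, the $\fm_1$-component of $\eta(y)$ induces the zero vector and hence vanishes by the injectivity of $\xi\mapsto\xi_M(y)$ on $\fm_1$ (i.e., the isomorphism $S\times\fm\to\cH|_S$ of Lemma~\ref{lem:3.2}, which you correctly invoke). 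Both arguments are sound and rest on the same underlying linear algebra of complements to $\fh$. Yours is somewhat more geometric and avoids introducing $B$ altogether; the paper's has the minor advantage of producing the closed formula $\varphi=B\circ\psi_1$, from which smoothness and $H$-equivariance of $\varphi$ are immediate, whereas you deduce them from the equivariance of $\chi_1,\chi_2$ --- which is equally valid. Your closing observation that $\varphi_M|_S=\varphi_S$ because $\varphi$ is $\fh$-valued and $S$ is $H$-invariant is a point the paper glosses over, and it is worth stating as you do.
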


\begin{proof}
Since $\fm_1$, $\fm_2$ are both complementary to $\fh$ in $\fg$ there exists a linear map 
\[
B: \fm_1 \to \fh
\]
so that $\fm_2$ is the graph of $B$:
\[
\fm_2 =\{ v +B(v) \in \fg \mid v\in \fm_1\}.
\]
Since the splittings \eqref{eq:3.17} are $H$-equivariant, the map $B$
is $H$-invariant.  Define $\psi_i \in C^\infty (S, \fm_i)^H$,
$(i=1,2)$, by
\[
\psi_i (y) := \left(T(ev_x)_0|_{\fm_i}\right)\inv \left( T\pi_y (X(y)\right).
\]
Then
\[
\psi_1 (y) + B(\psi_1 (y)) \textrm{ is in } \fm_2 \textrm{ for all } y\in S.
\]
Since $B(\psi_1(y))\in \fh$, we have 
\[
T(ev_x)_0 (B(\psi_1 (y))) = 0\quad \textrm{ for all }y\in S.
\]
Hence
\[
T(ev_x)_0 (\psi_1 (y) + B(\psi_1 (y)))=T(ev_x)_0 (\psi_1 (y))  = T\pi_y (X(y))
= T(ev_x)_0  (\psi_2 (y)).
\]
Since $T((ev_x)_0|_{\fm_2}$ is 1-1, it follows that 
\[
\psi_1 (y) + B(\psi_1 (y))=\psi_2 (y).
\]
We {\sf define} $\varphi:S\to \fh$ by
\[
\varphi(y) = B(\psi_1 (y));
\]
it is $H$-equivariant. Then
\[
\psi_2 - \psi_1 = \varphi.
\]
Moreover
\[
(X_1 -X_2)(y) = (X(y) - (\psi_1)_M (y)) - (X(y) - (\psi_2)_M (y))=
(\psi_2 -\psi_1)_M (y) = \varphi_M (y), 
\]
which proves the lemma.
\end{proof}
Next we address the dependence of the projection
\[
\fq: \Gamma (T(G{\cdot}S))^G \to \Gamma (TS)^H
\]
on the point $x$.  For any $g\in G$ 
\[
S':= g_M (S) \quad (= g\cdot S)
\]
is a slice through $g\cdot x$, the stabilizer of $g\cdot x $ is $H' =
gHg\inv$, and an $H$-equivariant splitting $\fg = \fh \oplus \fm$
defines an $H'$-equivariant splitting
\begin{equation}
\fg = Ad(g)\fh\oplus Ad(g) \fm = \fh'\oplus Ad(g)\fm.
\end{equation}
Since $g_M:S\to S'$ is an $H-H'$-equivariant diffeomorphism, it
defines an isomorphism
\[
g_*: \Gamma (TS)^H \to \Gamma (TS')^{H'}, 
\]
given by 
\[
(g_*Y)(y') = Tg_M Y(g\inv \cdot y'), \quad y'\in S'.
\]
Equation \eqref{eq:3.12} now implies that the image $X^{S'}$ of $X\in
\Gamma (T(G{\cdot S}))^G$ under the projection
\[
\fq': \Gamma (T(G{\cdot S}))^G \to \Gamma (TS')^{H'}
\]
(which is defined by the slice $S'$ and the splitting $\fg= \fh'
\oplus Ad(g)\fm$) is exactly $g_* X^S$.  In other words, in this case
\[
X^{S'} \textrm{ and } g_* X^S \textrm{ are equal.}
\]
Therefore, to finish our proof of Theorem~\ref{thm:1.4} it is enough
to show:
\begin{lemma}\label{lem:3.22}
  Suppose $S, S'$ are two slices through the {\sf same} point $x$. Let
  $\varphi:S\to S'$, $\varphi(y) = f(y)\cdot y$ be the equivariant
  diffeomorphism of Lemma~\ref{lem:2.3}.  Then for any $G$-invariant
  vector field $X$ on the original manifold $M$ the vector fields $X^{S'}$
  and $\varphi_* X^S \in \Gamma (TS')^H$ are isomorphic: there exists
  an $H$-equivariant map $\nu: S'\to \fh$ with
\[
X^{S'} - \varphi_*X^S = \nu_S.
\]
Here $\nu_S (y') = \frac{d}{dt}|_0 (\exp t\nu(y')) \cdot y'$ and $
\varphi_*X^S (y') = T\varphi_{\varphi\inv (y')}X^S (\varphi\inv (y'))$
for $y'\in S'$.
\end{lemma}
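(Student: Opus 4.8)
The plan is to evaluate $X$ at the point $y' := \varphi(y) = f(y)\cdot y$ in two different ways and compare the results: first by pushing the $S$-adapted decomposition of $X$ forward along $f(y)$, and then by reading off the $S'$-adapted decomposition of $X$ directly. Throughout I work on the $H$-invariant neighborhood on which the map $\varphi$ of Lemma~\ref{lem:2.3} is defined, and I use a single splitting $\fg = \fh\oplus\fm$ for both slices; independence from this choice was settled by the lemma above. For $y\in S$, Remark~\ref{rmrk:3.11} (that is, \eqref{eq:3.12} with $g=e$) gives the $S$-decomposition
\[
X(y) = X^S(y) + (\psi_X^S(y))_M(y), \qquad \psi_X^S(y)\in\fm,
\]
while the $S'$-decomposition at $y'$ reads $X(y') = X^{S'}(y') + (\psi_X^{S'}(y'))_M(y')$ with $\psi_X^{S'}(y')\in\fm$.

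First I would transport the $S$-decomposition to $y'$. Since $X$ is $G$-invariant, $X(y') = T(f(y)_M)_y\,X(y)$, and the elementary identity
\[
T(a_M)_y\,\xi_M(y) = (Ad(a)\xi)_M(a\cdot y), \qquad a\in G,\ \xi\in\fg,
\]
converts the horizontal term into an induced vector field based at $y'$:
\[
X(y') = T(f(y)_M)_y\,X^S(y) + \big(Ad(f(y))\,\psi_X^S(y)\big)_M(y').
\]
Next I would rewrite the first summand with Lemma~\ref{lem:3.15}. Writing $\zeta(y) := TL_{f(y)\inv}\big(Tf_y(X^S(y))\big)\in\fg$, that lemma gives $\varphi_*X^S(y') = T\varphi_y(X^S(y)) = T(f(y)_M)_y\big[\zeta(y)_M(y) + X^S(y)\big]$, so a second application of the $Ad$-identity yields
\[
T(f(y)_M)_y\,X^S(y) = \varphi_*X^S(y') - \big(Ad(f(y))\,\zeta(y)\big)_M(y').
\]
Substituting this together with the $S'$-decomposition into the previous display and collecting the induced-vector-field terms produces
\[
X^{S'}(y') - \varphi_*X^S(y') = \eta(y')_M(y'),
\]
where $\eta(y') := Ad(f(y))\big(\psi_X^S(y) - \zeta(y)\big) - \psi_X^{S'}(y')\in\fg$ depends smoothly on $y'$.

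The crux is to show that $\eta(y')$ in fact lies in $\fh$, so that it can serve as the gauge parameter $\nu$. The left-hand side of the last display is a difference of two vector fields tangent to $S'$, hence is itself tangent to $S'$. On the other hand, the construction of Lemma~\ref{lem:3.2} applied to the slice $S'$ shows that its horizontal distribution along $S'$ is $\fm_M$, so that at every point one has the direct sum $T_{y'}M = T_{y'}S' \oplus \fm_M(y')$ and the map $\xi\mapsto\xi_M(y')$ is injective on $\fm$. Decomposing $\eta(y') = \eta_\fh + \eta_\fm$ along $\fg = \fh\oplus\fm$, the vector $(\eta_\fh)_M(y')$ is tangent to $S'$ because $H$ preserves $S'$; therefore $(\eta_\fm)_M(y') = \eta(y')_M(y') - (\eta_\fh)_M(y')$ is both tangent to $S'$ and contained in $\fm_M(y')$, so it vanishes, forcing $\eta_\fm = 0$ by injectivity. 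Thus $\eta(y')\in\fh$, and setting $\nu := \eta$ gives an $H$-equivariant map $\nu:S'\to\fh$ with $\nu_S = X^{S'} - \varphi_*X^S$, the equivariance following from that of $f$, $X^S$, $\psi_X^S$ and $\psi_X^{S'}$. The main obstacle is exactly this transversality step: everything preceding it is bookkeeping with the chain rule and the $Ad$-identity, but pinning the difference down to an $\fh$-valued parameter uses the slice property of $S'$ in an essential way.
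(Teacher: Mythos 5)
Your argument is correct and follows essentially the same route as the paper's own proof: decompose $X$ along each slice via Remark~\ref{rmrk:3.11}, push the $S$-decomposition to $y'$ with Lemma~\ref{lem:3.15} and the identity $T(a_M)\,\xi_M = (Ad(a)\xi)_M\circ a_M$, and conclude that the resulting gauge parameter is $\fh$-valued because its induced vector field is tangent to $S'$. The only differences are cosmetic --- you track the pointwise $Ad(f(y))$-conjugations where the paper invokes $G$-invariance of the globally defined equivariant functions $\Psi^S$, $\Psi^{S'}$, $\mu$, and you spell out the final transversality step $T_{y'}M = T_{y'}S'\oplus\fm_M(y')$ that the paper leaves implicit (and you correctly use $X^S(y)$ rather than $X(y)$ as the argument of $Tf_y$).
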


\begin{proof} As before fix an $H$-equivariant splitting $\fg=
  \fh\oplus \fm$. Then by Remark~\ref{rmrk:3.11} for any point $y$ in
  the slice $S$ we have 
\[
X^S (y) = X (y) - (\Psi^S)_M (y)
\]
with $X^S\in \Gamma (TS)^H$ and $\Psi^S \in C^\infty (G\cdot S,
\fg)^G$.  Similarly
\[
X^{S'}(y') = X (y') - (\Psi^{S'})_M (y')
\]
for $y'\in S'$, $X^{S'}\in \Gamma (TS')^H$ and $\Psi^{S'} \in C^\infty
(G\cdot S, \fg)^G$.  By Lemma~\ref{lem:3.15}, for any $y\in S$,
\[
T\varphi_y (X^S(y)) = T (f(y)_M)_y (X^S(y) + \mu_M (y)),
\]
where $\mu \in C^\infty(G{\cdot}S, \fg)^G$ is defined by
\[
\mu(y) = TL_{f(y)\inv} (Tf_y (X(y)))
\]
for $y\in S$ and $\mu(g{\cdot}y) = Ad(g)\mu(y)$ for an arbitrary $g{\cdot}y\in
G{\cdot}S$.  Recall that for any $G$-invariant vector field $Z$ on
$G{\cdot}S$
\[
T(g_M) Z(y) = Z(g{\cdot }y) 
\]
for all $g\in M$ and $y\in S$.   
Therefore, for $y\in S$, $y' =f(y){\cdot} y\in S'$,
\begin{align*}
X^{S'}(y') - \varphi_*X^S(y') &= 
( X (y') -(\Psi^{S'})_M (y')) - T (f(y)_M)_y (X^S(y) + \mu_M (y))\\
&=  X (y') -(\Psi^{S'})_M (y') - T (f(y)_M)_y (X (y) - 
(\Psi^S)_M (y)) - \mu_M (y')\\
&=  X (y') -(\Psi^{S'})_M (y') - X(y') + (\Psi^S)_M (y') - \mu_M (y')\\
&= (\Psi^S - \Psi^{S'} - \mu)_M (y').
\end{align*}
Now define $\nu (y') = (\Psi^S - \Psi^{S'} - \mu) (y')$ for all $y'\in
S'$.  Since at points of the slice $S'$ the induced vector field
$\nu_M$ is tangent to $S'$, the function $\nu$ takes values in $\fh$.
\end{proof}
\noindent
This concludes our proof of Theorem~\ref{thm:1.4}.
\end{proof}

\noindent
We now turn out attention to Theorem~\ref{thm:1.5}.
\begin{proof}[Proof of Theorem~\ref{thm:1.5}]
Since $X - Y = \psi _V$, $DX(0) - DY(0) = D(\psi_V)(0)$.  Now, for any $v\in V$,
\begin{align*}
  D(\psi_V)(0)\, v
& = \left.\frac{\partial}{\partial s}\right|_{s=0} \psi_V (sv) 
= \left.\frac{\partial^2}{\partial s\partial t}\right|_{(0,0)} 
\rho (\exp t\psi_V (sv))(sv)\\ 
&=
 \left.\frac{\partial^2}{\partial s\partial t}\right|_{(0,0)} 
e^{t \delta \rho ( \psi_V (sv))}(sv) 
 = \left.\frac{\partial}{\partial s}\right|_{s=0} 
 \delta \rho (\psi (sv))\, (sv)\\ 
 &= \left(\delta \rho (\psi (sv)|_{s=0}\right) \left(
 \left.\frac{\partial}{\partial s}\right|_{s=0}(sv)\right) +
\left( \left.\frac{\partial}{\partial s}\right|_{s=0}\right)\left( \delta \rho (\psi (sv)
 \, (sv)|_{s= 0}\right) \\ 
 &= \delta \rho (\psi (0))v.
\end{align*}
\end{proof}

We finish the section with a proof of Theorem~\ref{thm:1.100}
\begin{proof}[Proof of Theorem~\ref{thm:1.100}]
  We may assume that $S_2 = T_{x_2} S_2$ and $x_2 = 0 \in T_{x_2} S_2$
  and the action of $H$ on $S_2$ is given by the slice representation
  $\rho: H\to T_{x_2} S_2$.  By Theorem~\ref{thm:1.4} there is an
  $H$-equivariant map $\psi:S_2 \to \fh_2$ so that
\[
X^{S_2} - \varphi_* X^{S_1}  = \psi_{S_2}.
\]
By Theorem~\ref{thm:1.5}
\[
D X^{S_2} (0)  - D(\varphi_* X^{S_1}) (0)  = \delta \rho (\psi (0)).
\]
Since $\psi:T_{x_2} S_2 \to \fh_2$ is $H_2$-equivariant and $0\in
T_{x_2} S_2$ is fixed by the action $H_2$, $\psi(0)$ is fixed by $H_2$
as well, i.e., $\xi:= \psi(0)\in \fh_2^{H_2}))$.  Therefore
\eqref{eq:1.100} holds.  Since $\varphi_* X^{S_1}$ is an $H_2$
invariant vector field, its differential $D(\varphi_* X^{S_1}) (0)$ is
an $H_2$-equivariant linear map from $T_{x_2} S_2$ to $T_{x_2} S_2$.
Hence $D(\varphi_* X^{S_1}) (0)$ and $\delta \rho (\xi)$ commute.
Consequently they have the same eigenvalues.  Since $D X^{S_2} (0) =
D(\varphi_* X^{S_1}) (0) + \delta \rho (\xi)$, the maps $D X^{S_2}
(x_2) = D X^{S_2} (0)$ and $D(\varphi_* X^{S_1}) (0)= T\varphi_{x_1}\,
D (X^{S_1}) (x_1)$ have the same eigenvectors as well.  Moreover, the
eigenvalues of $DX^{S_2}(0)$ are sums of the corresponding eigenvalues
of $D(\varphi_* X^{S_1}) (0)$ and $\delta \rho (\xi)$.  Since $H$ is
compact the eigenvalues of $\delta\rho (\xi)$ are purely imaginary for
all $\xi\in \fh$.  Hence the real parts of the eigenvalues of
$DX^{S_2} (0)$ and $D(\varphi_* X^{S_1}) (0)$ are the same.
\end{proof}

\section{Genericity in the space of invariant
  vector fields} \label{sec:4}

The goal of this section is to prove that the map \eqref{eq:1.8} is
part of a chain homotopy equivalence of 2-term chain complexes of
semi-topological vector spaces, Theorem~\ref{thm:???}.
Theorem~\ref{thm:1.11} follows readily from this result.
We start by recalling a fact about
Whitney $C^\infty$ topologies.
\begin{proposition}[\protect{\cite[Proposition~3.5,
    p.~46]{GG}}]\label{prop:3.5gg}
  Let $M, N, Q$ be three manifolds and $f\in C^\infty (N,Q)$. Then the
  map $f_*:C^\infty (M,N) \to C^\infty (M,Q)$ given by $f_*(\varphi)
  := f\circ \varphi$ is continuous in the Whitney $C^\infty$ topology.
\end{proposition}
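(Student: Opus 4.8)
The plan is to reduce the claim to the finite-order Whitney $C^k$ topologies and then invoke the functoriality of jet prolongation. Recall that a basic open set of the Whitney $C^k$ topology on $C^\infty(M,Q)$ has the form
\[
\mathcal{M}(U) = \{\psi \in C^\infty(M,Q) : (j^k\psi)(M) \subseteq U\},
\]
where $U \subseteq J^k(M,Q)$ is open and $j^k\psi: M \to J^k(M,Q)$ is the $k$-jet prolongation of $\psi$. Since the $C^k$ topologies increase with $k$, the sets $\mathcal{M}(U)$, taken over all $k$ and all open $U \subseteq J^k(M,Q)$, generate the Whitney $C^\infty$ topology on $C^\infty(M,Q)$. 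Consequently it suffices to prove that for each $k$ and each open $U \subseteq J^k(M,Q)$ the preimage $f_*^{-1}(\mathcal{M}(U))$ is open in $C^\infty(M,N)$.

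First I would build the map induced by $f$ on jet bundles. Postcomposition with the fixed smooth map $f$ determines
\[
f_\#: J^k(M,N) \to J^k(M,Q), \qquad f_\#\big(j^k_x\varphi\big) = j^k_x(f\circ\varphi).
\]
This is well defined because the $k$-jet of $f\circ\varphi$ at $x$ depends only on the $k$-jet of $\varphi$ at $x$ together with the $k$-jet of the fixed map $f$ at $\varphi(x)$; in local jet coordinates the assembly is given by the higher-order chain rule, hence is polynomial in the jet coordinates of $\varphi$ and smooth in the base variables. Thus $f_\#$ is smooth, in particular continuous, and one has the prolongation identity
\[
j^k(f\circ\varphi) = f_\# \circ j^k\varphi \qquad \text{for all } \varphi \in C^\infty(M,N).
\]

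Granting this, the preimage computation is immediate:
\begin{align*}
f_*^{-1}(\mathcal{M}(U)) &= \{\varphi : (j^k(f\circ\varphi))(M) \subseteq U\} = \{\varphi : (f_\# \circ j^k\varphi)(M) \subseteq U\} \\
&= \{\varphi : (j^k\varphi)(M) \subseteq f_\#^{-1}(U)\} = \mathcal{M}\big(f_\#^{-1}(U)\big).
\end{align*}
Because $f_\#$ is continuous and $U$ is open, $f_\#^{-1}(U)$ is open in $J^k(M,N)$, so $\mathcal{M}(f_\#^{-1}(U))$ is a basic $C^k$-open set, hence $C^\infty$-open. As this holds for every $k$ and every $U$, and such sets generate the $C^\infty$ topology on the target, $f_*$ is continuous.

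The main obstacle is the second step: checking carefully that $f_\#$ is well defined and smooth. This is where the real content lies, and it rests on the local description of $J^k$ together with the fact that the Taylor coefficients of $f\circ\varphi$ up to order $k$ are universal expressions in the Taylor coefficients of $\varphi$ and of $f$ to the same order. Once functoriality of prolongation is in hand, continuity follows from the one-line preimage identity above, uniformly in $k$.
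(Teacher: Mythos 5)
Your proof is correct. The paper does not prove this proposition---it is quoted directly from \cite[Proposition~3.5]{GG}---and your argument (reduce continuity to the basic $C^k$-open sets $\mathcal{M}(U)$, then pull them back along the induced jet-bundle map $f_\#:J^k(M,N)\to J^k(M,Q)$, which is well defined and smooth by the higher-order chain rule) is precisely the standard proof given in that reference.
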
 
We apply it as follows.
\begin{proposition}
  Let $M$ be a manifold with an action of a Lie group $G$.  The map
\[
\partial: C^\infty (M, \fg)^G \to \Gamma(TM)^G, \quad \partial(\psi):= \psi_M
\]
is continuous in the Whitney $C^\infty$ topology (q.v.\ \eqref{eq:1'}).  In
other words $\partial: C^\infty (M, \fg)^G \to \Gamma(TM)^G$
is a  2-term chain complex of semi-topological vector spaces.
\end{proposition}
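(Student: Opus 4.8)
The plan is to reduce the claim to the functoriality result already quoted as Proposition~\ref{prop:3.5gg}. The essential point is that the map $\partial(\psi) = \psi_M$ is, up to a fixed smooth post-composition, obtained by pushing $\psi$ forward along a smooth map between fixed manifolds. First I would exhibit $\psi_M$ as the composite of $\psi$ with a single smooth bundle map that does not depend on $\psi$. Concretely, recall from \eqref{eq:1'} that $\psi_M(m) = \left.\frac{d}{dt}\right|_{t=0}\exp(t\psi(m))\cdot m = T(ev_m)_0\,\psi(m)$. The assignment $(m,\xi)\mapsto \xi_M(m) = T(ev_m)_0\,\xi$ defines a smooth vector bundle map
\[
\alpha: M\times \fg \to TM,\qquad \alpha(m,\xi) = T(ev_m)_0\,\xi,
\]
covering the identity on $M$; its smoothness follows from the smoothness of the action and the exponential map. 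Then $\psi_M$ is exactly the composite $m\mapsto \alpha(m,\psi(m))$.

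Next I would phrase this composite in the form to which Proposition~\ref{prop:3.5gg} applies. Regard an element $\psi\in C^\infty(M,\fg)$ as a section of the trivial bundle $M\times\fg$, equivalently as the smooth map $\widetilde\psi = (\mathrm{id}_M,\psi): M\to M\times\fg$. Then $\partial(\psi) = \psi_M = \alpha\circ\widetilde\psi$, viewed as a section of $TM$, i.e.\ as a map $M\to TM$. The map $\psi\mapsto\widetilde\psi$ from $C^\infty(M,\fg)$ into $C^\infty(M,M\times\fg)$ is continuous in the Whitney $C^\infty$ topology, since adjoining the fixed identity component affects only a coordinate that is held constant and derivatives in it vanish; and post-composition with the fixed smooth map $\alpha$ is continuous by Proposition~\ref{prop:3.5gg} with $N = M\times\fg$ and $Q = TM$. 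Composing these two continuous maps shows that $\psi\mapsto\psi_M$ is continuous from $C^\infty(M,\fg)$ to $\Gamma(TM)=C^\infty(M,TM)$.

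It then remains to restrict to the invariant subspaces. The invariant spaces $C^\infty(M,\fg)^G$ and $\Gamma(TM)^G$ carry the subspace topologies induced from $C^\infty(M,\fg)$ and $\Gamma(TM)$ respectively, and $\partial$ carries $C^\infty(M,\fg)^G$ into $\Gamma(TM)^G$ by \eqref{eq:chain-cx}. Since the restriction of a continuous map to a subspace, with the subspace topology on source and target, is continuous, the restricted map $\partial:C^\infty(M,\fg)^G\to\Gamma(TM)^G$ is continuous. Finally, $\partial$ is linear by \eqref{eq:chain-cx} and one obviously has $\partial\circ\partial$ undefined/irrelevant here — the two-term complex is simply $\partial:C^\infty(M,\fg)^G\to\Gamma(TM)^G$ with zero differentials elsewhere — so being a 2-term chain complex of semi-topological vector spaces amounts exactly to $\partial$ being a continuous linear map, which we have established.

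I expect the only genuinely delicate point to be the smoothness of the bundle map $\alpha:(m,\xi)\mapsto T(ev_m)_0\,\xi$ jointly in $m$ and $\xi$, together with the verification that Whitney continuity is preserved under the adjoining map $\psi\mapsto(\mathrm{id},\psi)$; both are standard but deserve a line. Everything downstream is a formal application of Proposition~\ref{prop:3.5gg} and passage to subspace topologies, so the main obstacle is purely the clean identification of $\partial$ as post-composition with a \emph{fixed} smooth map, which is the key structural observation.
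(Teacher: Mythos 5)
Your proof is correct and follows essentially the same route as the paper: both identify $\partial$ as post-composition of $\psi$ (viewed via $(\mathrm{id}_M,\psi)$) with the fixed smooth bundle map $M\times\fg\to TM$, $(m,\xi)\mapsto\xi_M(m)$, apply Proposition~\ref{prop:3.5gg}, and then restrict to the invariant subspaces. Your explicit treatment of the adjoining map $\psi\mapsto(\mathrm{id}_M,\psi)$ is a minor refinement the paper leaves implicit.
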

\begin{proof}
  We argue that the map $\partial$ is continuous. The action of the
  Lie group $G$ on the manifold $M$ defines a smooth map $(\cdot)_M$ from the
  trivial vector bundle $M\times \fg\to M$ to the tangent bundle
  $TM$. The map is defined by
\[
(m,X)\mapsto X_M(m):= \left.\frac{d}{dt}\right|_0 \exp (tX)\cdot m.
\]
Hence by Proposition~\ref{prop:3.5gg} the map
\[
C^\infty(M,\fg) \to \Gamma (TM), 
\qquad \psi \mapsto \psi_M = (\cdot)_M \circ \psi
\]
is continuous.  Consequently its restriction to equivariant functions
\[
(\cdot)_M: C^\infty(M,\fg)^G \to \Gamma (TM)^G
\]
is continuous as well.  
\end{proof}
\begin{theorem}\label{thm:???}
  Let $M$ be a manifold with a proper action of a Lie group $G$ and
$S\subset M$ a global slice through $x\in M$ for the action of $G$ so
that $M= G\cdot S \simeq G\times ^HS$, where $H$ is the stabilizer of $S$.
Consider the map 
\begin{equation} \label{eq:4.4}
  K_\bullet: \left(  C^\infty (S, \fh)^H \xrightarrow{\quad\partial\quad} 
    \Gamma(TS)^H\right ) \to  
\left(  C^\infty (G{\cdot }S, \fg)^G 
\xrightarrow{\quad\partial\quad} \Gamma(T (G{\cdot }S))^G\right )
\end{equation}
defined on  $C^\infty (S, \fh)^H$ by
\[
K_1(\psi)(g\cdot y) := Ad(g)\psi (y)\qquad\textrm{for all } g\in G, y\in S
\]
and on $\Gamma (TS)^H$ by 
\[
K_0(Y) (g\cdot y) = T(g_M)_y Y(y) \qquad\textrm{for all } g\in G, y\in S.
\]
 Then $K_\bullet$, which is a map of 2-term chain
complexes of semi-topological spaces, is (part of) a chain homotopy
equivalence.
\end{theorem}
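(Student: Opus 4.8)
The plan is to produce an explicit chain-homotopy inverse $L_\bullet$ of $K_\bullet$, together with a single homotopy operator witnessing $K_\bullet L_\bullet\simeq\mathrm{id}$, all manufactured from the splitting data already used in Lemma~\ref{lem:3.2}. Fix once and for all an $H$-equivariant splitting $\fg=\fh\oplus\fm$ and write $\mathrm{pr}_\fh,\mathrm{pr}_\fm$ for the associated projections. First I would record that $K_\bullet$ is genuinely a chain map: using $\exp(Ad(g)\xi)=g\,\exp(\xi)\,g\inv$ one computes, for $\psi\in C^\infty(S,\fh)^H$ and $y\in S$,
\[
\partial(K_1\psi)(g\cdot y)=(K_1\psi)_M(g\cdot y)=T(g_M)_y\,\psi_S(y)=K_0(\partial\psi)(g\cdot y),
\]
so $\partial K_1=K_0\partial$. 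I then define the candidate inverse $L_\bullet$ from the decomposition \eqref{eq:3.5}: on degree $0$ put $L_0(X)=X^S=X^{\mathrm v}|_S$, and on degree $1$ put $L_1(\Psi)=\mathrm{pr}_\fh\circ(\Psi|_S)$.

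Next I would verify that $L_\bullet$ is a chain map and that $L_\bullet K_\bullet=\mathrm{id}_{A_\bullet}$ on the nose. Chain-map-ness follows from the pointwise identity $\Psi_M(y)=(\mathrm{pr}_\fh\Psi(y))_M(y)+(\mathrm{pr}_\fm\Psi(y))_M(y)$ at $y\in S$: the first summand is tangent to $S$ (hence vertical) because $\mathrm{pr}_\fh\Psi(y)\in\fh$, while the second is horizontal by the very definition of $\cH$. Reading off the vertical part at points of $S$ gives $\partial L_1=L_0\partial$. Setting $g=e$ in the defining formulas then yields $K_1(\psi)|_S=\psi$ and $(K_0Y)^S=Y$, so that $L_1K_1=\mathrm{id}$ and $L_0K_0=\mathrm{id}$; hence $L_\bullet K_\bullet=\mathrm{id}_{A_\bullet}$ exactly.

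The only composite requiring a genuine homotopy is $K_\bullet L_\bullet$. I would take the homotopy operator $h\colon\Gamma(T(G{\cdot}S))^G\to C^\infty(G{\cdot}S,\fg)^G$ to be $h(X)=\Psi_X$, the $G$-equivariant extension $\Psi_X(g\cdot y)=Ad(g)\,\fp(X)(y)$ of the $\fm$-valued function $\fp(X)=\psi^S_X$ from \eqref{eq:3.8} and Remark~\ref{rmrk:3.11}. In degree $0$, Remark~\ref{rmrk:3.11} gives $\partial h(X)=(\Psi_X)_M=X^{\mathrm h}=X-X^{\mathrm v}=(\mathrm{id}-K_0L_0)(X)$. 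In degree $1$, the identity $\fp(\Psi_M)(y)=\mathrm{pr}_\fm\Psi(y)$ for $y\in S$ — a consequence of the isomorphism $S\times\fm\xrightarrow{\sim}\cH|_S$ of Lemma~\ref{lem:3.2} — yields $h(\partial\Psi)(g\cdot y)=Ad(g)\,\mathrm{pr}_\fm\Psi(y)=(\mathrm{id}-K_1L_1)(\Psi)(g\cdot y)$. Since $h$ lands in degree $1$, its only contributions are $\partial h$ in degree $0$ and $h\partial$ in degree $1$, so these two identities are precisely $\mathrm{id}_{B_\bullet}-K_\bullet L_\bullet=\partial h+h\partial$, proving $K_\bullet L_\bullet\simeq\mathrm{id}_{B_\bullet}$.

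The remaining point, which I expect to be the genuine obstacle, is the semi-topological refinement: one must check that $L_0$, $L_1$ and $h$ are continuous in the Whitney $C^\infty$ topologies, so that everything takes place in the category of semi-topological complexes. Continuity of $K_0,K_1$ and of the extension step in $h$ follows from Proposition~\ref{prop:3.5gg}, once these are written, after the trivialization $G{\cdot}S\simeq G\times^HS$, as post-composition with the fixed smooth maps $(\cdot)_M$ and $Ad$. Likewise $L_1$ reduces to restriction to the slice followed by post-composition with the linear map $\mathrm{pr}_\fh$. The delicate maps are $L_0$ and $\fp$, the two components of the decomposition isomorphism of Lemma~\ref{lem:3.2}, since they are built from the connection on the (possibly noncompact) tube; I would handle these by expressing the vertical/horizontal splitting in the trivialization $G\times^HS$ as a fixed smooth fiberwise-linear bundle endomorphism and then invoking Proposition~\ref{prop:3.5gg} together with continuity of restriction to the closed submanifold $S$, thereby upgrading the algebraic chain-homotopy equivalence to one of semi-topological complexes.
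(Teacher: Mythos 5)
Your proposal is correct and follows essentially the same route as the paper: both exploit the splitting $\fg=\fh\oplus\fm$ and the vertical/horizontal decomposition of Lemma~\ref{lem:3.2} to exhibit $K_\bullet$ as the inclusion of a direct summand whose complement is the contractible complex $\mathrm{id}\colon C^\infty(S,\fm)^H\to C^\infty(S,\fm)^H$. The only difference is presentational --- the paper packages the homotopy inverse and homotopy into a commuting diagram of isomorphisms and projections, whereas you write out $L_\bullet$ and $h$ explicitly, which makes the identity $\mathrm{id}-K_\bullet L_\bullet=\partial h+h\partial$ visible but adds nothing beyond the paper's argument.
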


\begin{proof}
Since 
\[
(K(\psi))_M (g\cdot y) = T(g_M)_y \psi_S (y),
\]
$K$ is a map of chain complexes.  We have two isomorphisms of
semi-topological vector spaces
\[
C^\infty (G\cdot S, \fg)^G \to C^\infty (S, \fg)^H \quad\textrm{and} \quad
\Gamma(T(G\cdot S))^G \to \Gamma (T(G\cdot S)|_S)^H
\]
which are both given by restrictions.  A choice of an $H$-equivariant
splitting $\fg= \fh\oplus \fm$ of the Lie algebra $\fg$ defines two
more isomorphisms:
\[
C^\infty (S, \fg)^H \simeq C^\infty (S, \fh)^H\oplus C^\infty (S,\fm)^H
\quad\textrm{and} \quad 
\Gamma (T(G\cdot S)|_S)^H \simeq
\Gamma(TS)^H \oplus C^\infty (S, \fm)^H.
\]
Next observe that the diagram 
\[
\xy
(-70,10)*+{C^\infty (S, \fh)^H}="1";
(-30,10)*+{C^\infty (S, \fh)^H \oplus C^\infty (S,\fm)^H}="2";
(20,10)*+{C^\infty (S, \fg)^H }="3";
(60,10)*+{C^\infty (G\cdot S, \fg)^G }="4";
(-70,-10)*+{\Gamma(TS)^H }="5";
(-30,-10)*+{ \Gamma(TS)^H \oplus C^\infty (S, \fm)^H}="6";
(20,-10)*+{ \Gamma (T(G\cdot S)|_S)^H }="7";
(60,-10)*+{\Gamma (T(G\cdot S))^G  }="8";
 {\ar@{->}_{\partial} "1";"5"};
 {\ar@{->}^{\partial\oplus id_{C^\infty (S,\fm)^H}} "2";"6"};
 {\ar@{->}_{\partial} "4";"8"};
 {\ar@{->}^<<<{\imath_1} "1";"2"};
{\ar@{->}^{\simeq} "2";"3"};
{\ar@{->}^{\simeq} "3";"4"};
{\ar@{->}^<<<{\imath_0} "5";"6"};
{\ar@{->}^{\simeq} "6";"7"};{\ar@{->}^{\simeq} "7";"8"};
\endxy
\]
commutes, and that the composites of the horizontal arrows are $K_1$
and $K_0$, respectively.  The projections
\[
p_1:C^\infty (S, \fh)^H \oplus C^\infty (S,\fm)^H \to  C^\infty (S, \fh)^H\quad
\textrm{and} \quad 
 p_0: \Gamma(TS)^H \oplus C^\infty (S, \fm)^H \to  \Gamma(TS)^H
\]
are continuous and  define a chain homotopy inverse to the
chain map $\imath_\bullet$.  The result follows.
\end{proof}

\section{Relation with the work of Hepworth and
  Baez-Crans}\label{sec:groupoid}

A 2-term chain complex $V_1 \xrightarrow{\partial} V_0$ of
semi-topological vector spaces defines an action of the abelian
topological group $V_1$ on $V_0$ by
\[
v_1 * v_0:= \partial (v_1) + v_0.
\]
The action is continuous since $+$ and $\partial$ are continuous.
Hence the corresponding action groupoid
\[
 V_1\times V_0 \toto V_0
\]
is a semi-topological linear groupoid, that is, a groupoid internal to
the category of semi-topological vector spaces.  The collection of all
2-term chain complexes of semi-topological vector spaces form a
2-category $\stterm$: the 1-morphisms are (continuous) chain maps and
2-morphisms are (continuous) chain homotopies.  Semi-topological
linear groupoids also form a 2-category; call it $\stGpds$: the
1-morphisms are (linear continuous) functors and 2-morphisms are
natural transformations.  The map that assigns to a 2-term chain
complex the corresponding action groupoid extends to a 2-functor
\[
{\mathcal S}_{st}:\stterm\to \stGpds.
\]
\begin{definition}\label{def:XX}
  Let $M$ be a manifold with an action of a Lie group $G$.  The {\em
    groupoid of invariant vector fields} $\XX(G\times M\toto M)$ is
  the semi-topological linear groupoid corresponding to the 2-term
  chain complex:
\[
\partial: C^\infty (M, \fg)^G \to \Gamma(TM)^G, \quad \partial(\psi):= \psi_M
\]
of semi-topological vector spaces (q.v. \eqref{eq:chain-cx}).
\end{definition}
If we forget about topology and continuity, we get the 2-categories
$\twterm$ of 2-term chain complexes of vector spaces and $\twVect$ of
groupoids internal to the category of vector spaces.  The 2-category
$\twVect$ is more commonly known as the 2-category of Baez-Crans
2-vector spaces \cite{BC}.  Just as in the case of semi-topological
vector spaces there is a  2-functor
\[
{\mathcal S}:\twterm\to \twVect,
\]
which assigns to a 2-term chain complex the corresponding action
groupoid etc.  Baez and Crans proved ({\em op.\ cit.}) that
$\mathcal{S}$ is an equivalence of 2-categories.  We note that results
of this sort had been known before \cite{Groth}, but no detailed
accounts were written down.  We also note that the analogous statement
for $\mathcal{S}_{st}:\stterm\to \stGpds$ is unlikely to be true.  The
problem is that short exact sequences of semi-topological vector
spaces need not split. One expects that the problem is fixable at a
price --- one should  enlarge the 2-category $\stterm$ to a
bicategory whose 1-morphisms are analogues of Noohi's butterflies
\cite{Noohi}.

If once again we forget topology amd assume that the group $G$ is {\sf
  compact}, then the groupoid $\XX(G\times M\toto M)$ can be
rigorously interpreted as the groupoid of vector fields on the
quotient stack $[M/G]$ (\cite[Proposition~6.1]{Hep}).  Here are the
details.

Traditionally given a proper action of a Lie group $G$ on a manifold
$M$ one thinks of a quotient $M/G$ as a topological space with some
additional structure.  For instance one proves that $M/G$ is a
stratified space and that it is ``smooth'' in an appropriate sense.
There is also another notion of a quotient that has its origins in the
works of Grothendieck, Deligne, Mumford, Artin and their collaborators
--- this is the notion of a stack quotient $[M/G]$.  Stack quotients
are instances of geometric stacks and consequently come with atlases.
A choice of an atlas for a stack defines a Lie groupoid that
``represents'' the stack.  Two different choices of atlases give rise
to Morita equivalent Lie groupoids.  It turns out that the notion of a
vector field on a stack does make sense, but instead of forming a
vector space the collection of vector fields on a given stack
${S}$ forms a category $\mathcal{X}({S})$.  It was
shown by Hepworth \cite{Hep} that if a stack ${S}$ is represented by
a Lie groupoid $\scG$ then the category of vector fields on $S$ is
equivalent to the category $\scX (\scG)$ of the so called
multiplicative vector fields on $\scG$.

We recall how the category of multiplicative vector fields, which were
defined by Mackenzie and Xu M \cite{MX}, comes about.  Given a
groupoid $\scG$ there is a tangent groupoid $T\scG$ with a canonical
functor $\pi:T\scG\to \scG$.  A {\sf multiplicative vector field} $X$
on a groupoid $\scG$ is a section of $\pi:T\scG \to \scG$.  In
particular it is a functor.  Since multiplicative vector fields are
functors, it make sense to talk about natural transformations between
them. Since vector fields take values in groupoids the natural
transformations are natural isomorphisms.  Thus multiplicative vector
fields on a Lie groupoid $\scG$ form a category (in fact, a groupoid).

Given an action of a Lie group $G$ on a manifold $M$, we have an
action Lie groupoid $G\times M\toto M$.  An invariant vector field
$X:M\to TM$ extends canonically to a multiplicative vector field
\[
(0, X): G\times M\to TG\times TM.  
\]
Consequently the groupoid $\XX (G\times M\toto M)$ is contained in the
groupoid $\scX (G\times M\toto M)$ of multiplicative vector fields.
Hepworth shows that if $G$ is compact, then inclusion is a fully
faithful and essentially surjective functor (if $G$ is not compact
this need not be the case).  Hence the groupoids $\XX (G\times M\toto
M)$ and $ \scX (G\times M\toto M)$ are equivalent.  As was mentioned
above the groupoid of multiplicative vector fields $\scX (\scG)$ on a
groupoid $\scG$, in turn, is equivalent to the groupoid of vector
fields $\mathcal{X}(B\scG)$ on the stack $B\scG$ of principal $\scG$
bundles.  Thus if two Lie groupoids $\scG$ and $\scH$ are Morita
equivalent, then the stacks $B\scG$ and $B\scH$ are isomorphic and
consequently the groupoids of multiplicative vector fields
$\scX(\scG)$ and $\scX (\scH)$ are equivalent as well.

On a manifold a vector field integrates to flow, which is, more or
less, a one-parameter family of diffeomorphisms.  Analogously on a Lie
groupoid a multiplicative vector field integrates to a one-parameter
family of functors \cite{MX} (this is imprecise; a precise statement
involves double groupoids).  An isomorphism between two multiplicative
vector fields should integrate to a one-parameter family of natural
isomorphisms between these functors.  This is what we proved in
Section~\ref{sec:2} in the special case of invariant vector fields.

If $S$ is a slice for the action of a group $G$ on a manifold $M$ and
$G\cdot S \simeq G\times ^H S$ is the corresponding tube, then the action
groupoids $G\times G\cdot S \toto G\cdot S$ and $H\times S\toto S$ are Morita
equivalent.  Consequently the groupoids $\XX (G\times G\cdot S \toto G\cdot S)$
and $\XX (H\times S \toto S)$ of invariant vector fields are Morita
equivalent (at least when $G$ is compact).  This follows from the
chain of equivalences of groupoids
\begin{align*}
\XX (H\times S\toto S) \simeq \scX(H\times S\toto S) &\simeq
\mathcal{X}([S/H]) \simeq \mathcal{X}([G \cdot S/G]) \\ 
\simeq \scX(G \times G\cdot S\toto G\cdot S) &\simeq \XX (G
\times G\cdot S\toto G\cdot S).
\end{align*}
So perhaps it is not so surprising that the natural functor
\[
K: \XX (H\times S \toto S) \to \XX (G\times
  GS \toto GS)
\]
which corresponds to the map of chain complexes $K_\bullet$ (q.v.\
\eqref{eq:4.4}) is an equivalence of categories.  I think it {\em is}
surprising that $K$ is a strong equivalence of semi-topological linear
groupoids, that is, the chain map $K_\bullet$ has a homotopy inverse.


\begin{thebibliography}{10}
\bibitem{AM} R.\ Abraham and J.E.\ Marsden, {\em Foundations of
    mechanics}, second edition, revised and enlarged. With the
  assistance of Tudor Ratiu and Richard Cushman. Benjamin/Cummings
  Publishing Co., Inc., Advanced Book Program, Reading, Mass., 1978.

\bibitem{BC} J.\ Baez and A.\ Crans, Higher-dimensional algebra VI: Lie
  2-algebras, {\em Theory and Applications of Categories }{\bf 12}
  (2004) 492--538, {\tt arXiv:math.QA/0307263.}

\bibitem{DK} J.J.\ Duistermaat and J.A.C.\ Kolk, {\em Lie Groups}
  Springer-Verlag, Berlin, Heidelberg, New York, 1999.



\bibitem{Hep} R. Hepworth, 
Vector fields and flows on differentiable stacks,
{\em Theory Appl. Categ.} {\bf 22} (2009), 542--587. 

\bibitem{Field} M.\ Field, {\em Dynamics and symmetry}, ICP Advanced
  Texts in Mathematics, {\bf 3}. Imperial College Press, London, 2007.
  xiv+478 pp.

\bibitem{Groth} A.\ Grothendieck, Categories Cofibrees Additives et
  Complexe Cotangent Relatif, {\em Lecture Notes in Math.} {\bf 79},
  Springer-Verlag, New York, 1968.

\bibitem{GG} M.\ Golubitsky and V.\ Guillemin, {\em Stable
      Mappings and Their Singularities}, Springer-Verlag, Berlin,
    Heidelberg, New York, 1973.

\bibitem{Krupa} M. Krupa, Bifurcations of relative equilibria, {\em
    SIAM J. Math. Anal.}, {\bf 21}(6), 1990, 1453--1486.

\bibitem{MX} K.C.H.\ Mackenzie and P.\ Xu,
Classical lifting processes and multiplicative vector fields,
{\em Quart. J. Math. Oxford Ser. (2)} {\bf 49} (1998), no. 193, 59--85. 

\bibitem{Noohi} B.\ Noohi, On weak maps between 2-groups, {\tt
    arXiv:math/0506313v3 [math.CT]}.
\end{thebibliography}
\end{document}